\documentclass{amsart}[12pt font]
\author{Tamanna Chatterjee, Laura Rider}
\date{\today}
\title{Action of the Relative Weyl group on partial springer sheaf}
\usepackage{amssymb}
\usepackage{amsmath}
\usepackage{calligra}
\usepackage{amsfonts}
\usepackage{mathrsfs}
\usepackage{mathtools}
\usepackage{tikz-cd}
\usepackage{pgfkeys,pgfopts,xcolor}
\usepackage{ytableau}
\usepackage{placeins}
\usetikzlibrary{matrix}
\usepackage{geometry,mathtools}
\setcounter{MaxMatrixCols}{11}
\theoremstyle{plain}
\newtheorem{theorem}{Theorem}[section]
\newtheorem{cor}[theorem]{Corollary}
\newtheorem{lemma}[theorem]{Lemma}

\newtheorem{ex}[theorem]{Example}

\newcommand{\mf}{\mathfrak}
\newcommand{\mc}{\mathcal}
\newcommand{\mb}{\mathbb}

\newcommand{\eal}{\end{align*}}

\newcommand{\bal}{\begin{align*}}

\DeclareMathOperator{\inn}{Ind}
\DeclareMathOperator{\p}{Perv}
\DeclareMathOperator{\f}{For}

\DeclareMathOperator{\loc}{Loc_f}

\DeclareMathOperator{\rhm}{R\mathscr{H}\text{\kern -3pt{\calligra\large om}}\, }
\DeclareMathOperator{\lie}{Lie}
\DeclareMathOperator{\h}{H}
\DeclareMathOperator{\Hom}{Hom}

\DeclareMathOperator{\IC}{\mathcal{IC}}

\DeclareMathOperator{\kp}{{\mathit{k}}}
\DeclareMathOperator{\NP}{\widetilde{\mathcal{N}}^P}
\DeclareMathOperator{\N}{\widetilde{\mathcal{N}}}
\DeclareMathOperator{\G}{\widetilde{\mathfrak{g}}}
\DeclareMathOperator{\GP}{\widetilde{\mathfrak{g}}^P}
\DeclareMathOperator{\grs}{\mf{g}_{rs}}
\DeclareMathOperator{\GRSP}{\widetilde{\mf{g}}^P_{rs}}
\DeclareMathOperator{\GRS}{\widetilde{\mf{g}_{rs}}}
\DeclareMathOperator{\LP}{L^P_{rs}}

\DeclareMathOperator{\End}{End}
\DeclareMathOperator{\Sp}{\underline{Spr}}
\DeclareMathOperator{\Spp}{\underline{Spr}^P}
\DeclareMathOperator{\Gr}{\underline{Groth}}
\DeclareMathOperator{\GrP}{\underline{Groth}^P}
\DeclareMathOperator{\pirsp}{\pi^P_{rs}}
\DeclareMathOperator{\con}{con}
\DeclareMathOperator{\pirs}{\pi_{rs}}
\DeclareMathOperator{\sgn}{sgn}
\DeclareMathOperator{\SpL}{\underline{Spr_L}}
\DeclareMathOperator{\ph}{^p H}
\DeclareMathOperator{\pd}{^{p}\mathcal{D}}
\DeclareMathOperator{\pt}{^p \tau}
\DeclareMathOperator{\SpU}{\underline{Spr}^U}
\DeclareMathOperator{\rhoU}{\phi_{U}}
\DeclareMathOperator{\phiU}{\rho_U}
\begin{document}
\maketitle
\begin{abstract} In the context of the Springer correspondence, the Weyl group action on the Springer sheaf can be defined in two ways: via restriction or the Fourier transform. It is well-known that these two actions differ by the sign character. This was proven by Hotta for sheaves with characteristic 0 coefficients in 1981, and more recently extended to arbitrary coefficients by Achar, Henderson, Juteau, and Riche in 2014. In this short article, we study an extension of this problem to the partial Springer sheaf with arbitrary field coefficients. This involves an action of the so-called relative Weyl group $W(L)$. 
\end{abstract}
\section{Introduction} Let $G$ be a complex, connected, reductive algebraic group, and let $\mc{N}$ be its nilpotent cone. The nilpotent cone is a singular variety, and so we study it together with its resolution of singularities $\N \to \mc{N}$, known as the Springer resolution. The geometry of this map appears throughout Lie theoretic representation theory. It is interesting and sometimes advantageous to consider resolving these singularities ``one simple root at a time." More precisely, each parabolic subgroup $P$ in $G$ gives rise to a space $\NP$ with maps,
\[ \N \to \NP \to \mc{N},
\]
whose composition equals the Springer resolution $\N \to \mc{N}$. The space $\NP$ ``partially resolves" the singularties of $\mc{N}$ in the sense that the map $\NP \to \mc{N}$ is proper, birational, and an isomorphism away from the singular points of $\mc{N}$, but $\NP$ isn't smooth unless $P$ is a Borel subgroup of $G$.

The space $\NP$ has been used to define parabolic induction and restriction functors for perverse sheaves on the nilpotent cone. This is most useful in the context of the generalized Springer correspondence \cite{Lu3}, \cite{AJHR}, \cite{AJHR2}, \cite{AJHR3} for grouping simple perverse sheaves on the nilpotent cone according to ``cuspidal data." On the other hand, one could study induction of the constant sheaf, which we refer to as the \textit{partial Springer sheaf} $\Spp$. This perverse sheaf, as well as a couple of similarly defined variants, denoted $\SpU$ and $\GrP$, will be the main players of this article. 
 
It is well-known in Springer theory that there are two ways to define the Springer sheaf, $\Sp$, and each inherits an action of the Weyl group $W$. For both constructions, we must utilize the Grothendieck--Springer sheaf $\Gr$ which lives on $\frak{g}$, the Lie algebra of $G$. The first construction, due to Springer in \cite{Sp} and later revisited by Lusztig with the language of perverse sheaves \cite{Lu1}, is by restriction to $\mc{N}$. The second is by applying the Fourier transform. It was proved in \cite{Ho} that the $W$-action obtained on $\Sp$ in these two constructions differ by the sign character for sheaves with characteristic 0 coefficients. 
 
The extension of these results to sheaves with positive characteristic coefficients requires a slightly different approach. In particular, it is not as straightforward to prove that the $W$-action from restriction on $\Sp$ induces an isomorphism $\kp[W]\cong\End(\Sp)$. This is proven directly by Riche in an unpublished article \cite{Ri} with some restrictions on the characteristic of $k$. On the other hand, since Fourier Transform is an auto-equivalence, proof of the isomorphism $\kp[W]\cong\End(\Sp)$ induced by Fourier transform is uniform for all characteristics. This was carried out by Juteau in \cite{Ju}. With an aim to bridge the disparity, Achar, Juteau, Henderson, and Riche prove that the sign change theorem holds with arbitrary coefficients \cite[Theorem 1.1]{AJHR}. Then they may utilize the isomorphism already known from the Fourier Transform to prove as an easy corollary that restriction also induces an isomorphism $\kp[W]\cong\End(\Sp)$ for arbitrary coefficients.

In this note, we study an analogue of the sign change theorem for the \textit{partial Springer sheaf} $\Spp$. Similar to the Springer case, $\Spp$ can be obtained by restriction of $\GrP$, the partial Grothendieck--Springer sheaf, from $\frak{g}$ to $\mc{N}$. However, unlike the Springer case, the Fourier Transform of $\GrP$ is not $\Spp$. It is a different perverse sheaf which we denote $\SpU$. 

Each of $\Spp, \GrP,$ and $\SpU$ admits an action of the \textit{relative Weyl group} $W(L):= N_G(L)/L$, where $L$ is the Levi factor of $P$. Our comparison of $W(L)$-actions occurs on $\SpU$. More specifically, we define maps
\[\phiU: \kp[W(L)]\to \End(\SpU) \text{, defined by restriction,}\]
and
\[\rhoU: \kp[W(L)]\to \End(\SpU) \text{, defined by the Fourier transform.} 
\] Note that the group $W(L)$ is not always a Coxeter group, so the sign representation isn't necessarily available. The analogue of \cite[Theorem 1.1]{AJHR} proven in this paper is the following theorem. 
\begin{theorem}
	There exists a map $\Lambda_U: \kp[W(L)]\to \kp[W(L)]$ such that,
	\[\rhoU=\phiU\circ \Lambda_U.\] 
\end{theorem}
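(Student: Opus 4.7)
The plan is to follow the strategy of Achar--Henderson--Juteau--Riche: reduce the comparison of the two $W(L)$-actions on $\SpU$ to a concrete computation on the regular semisimple locus $\grs$, and then transport the resulting formula back via the Fourier transform and proper base change.

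First I would identify both actions in a common model. Since the Fourier transform is an auto-equivalence of constructible sheaves on $\mf{g}$, there is a canonical isomorphism $\End(\GrP)\cong\End(\SpU)$. On the regular semisimple locus, the partial Grothendieck--Springer map $\GRSP\to\grs$ is a finite \'etale cover whose deck group is $W(L)$, so pulling back $\GrP$ to $\GRSP$ and using fully faithfulness of the restriction to this open dense subset yields $\End(\GrP)\cong\kp[W(L)]$. Composing these two isomorphisms provides a model $\kp[W(L)]\cong\End(\SpU)$ in which to analyze both $\phiU$ and $\rhoU$. Under this identification, $\phiU$ corresponds to the natural $W(L)$-action on the local system over $\GRSP$ pushed forward and then restricted to $\mc{N}$, while $\rhoU$ corresponds to transporting the same $W(L)$-action on $\GrP$ across the Fourier equivalence.

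Second I would compare the two actions using adjunction with parabolic induction and restriction. Both $W(L)$-actions come from functoriality of the induction functor, one on the nilpotent side and one on the Fourier-dual (full Lie algebra) side. The two then differ by the discrepancy between the induction functor composed with Fourier and the induction functor applied directly on the Fourier side; this is precisely the relative analogue of the calculation carried out in [AJHR] for the absolute Springer case. My candidate for the intertwiner is $\Lambda_U(w)=\varepsilon(w)\cdot w$, where $\varepsilon\colon W(L)\to\{\pm1\}$ is the determinant of the action of $W(L)$ on its reflection representation. This character is defined whether or not $W(L)$ is a Coxeter group, and it specializes to the usual sign character in the Borel case $P=B$.

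The principal obstacle will be the careful handling of the Fourier transform across a parabolic: the Fourier transform does not commute with the proper map $\NP\to\mc{N}$ or with restriction from $\mf{g}$ to $\mc{N}$, so the identification of $\rhoU$ with a transported action must be proved by lifting the computation to $\GP$ and tracking $W(L)$-equivariance at each step. A further subtlety is verifying that the explicit formula for $\Lambda_U$ holds uniformly in $L$; I would first check the statement in low-rank cases and in the situations where $W(L)$ is itself a Coxeter group (where the claim reduces to the usual sign change), and then argue that the general formula is forced by compatibility with restriction to intermediate Levi subgroups.
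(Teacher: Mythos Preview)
Your proposal has genuine gaps, and the route you outline diverges from the paper's in a way that leaves the key step unaddressed.

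The paper does not compare the two actions on the regular semisimple locus or via parabolic induction/restriction adjunctions. Instead, it pulls back to the fiber over $0\in\mf{g}$, namely $G/P$, and studies the induced $W(L)$-actions on $\h^*(G/P)$. Concretely, it builds a map $\sigma^P:\End(\SpU)\to\End(\h^*(G/P))$ and shows that $\sigma^P\circ\widetilde{\rho}^P=\gamma^P$ and $\sigma^P\circ\widetilde{\phi}^P=\gamma^P_c$, where $\gamma^P$ and $\gamma^P_c$ come from ordinary and compactly supported cohomology of $G/P$ respectively. The comparison then reduces to the Poincar\'e-duality relation $\h^{2N-i}(G/P)\cong\h^i(G/P)^*\otimes\h^{2N}(G/P)$ as $W(L)$-modules. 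One shows $\sigma^P$ is injective (via faithfulness of $\h^*(G/P)$ as a $W(L)$-module when $\mathrm{char}(\kp)\nmid|W|$), deduces the result over $\mb{Z}$, and then extends to arbitrary coefficients by the functorial base-change argument of \cite{AJHR1}. None of this apparatus appears in your plan; your ``compare via adjunction with parabolic induction'' and ``discrepancy between induction composed with Fourier and induction on the Fourier side'' are not developed into an actual computation, and the proposed verification by low-rank checks plus compatibility with intermediate Levis is not a proof.

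Your candidate for $\Lambda_U$ is also problematic. The paper's $\Lambda_U$ is $w\mapsto\epsilon_U(w)w$, where $\epsilon_U$ is the character by which $W(L)$ acts on the one-dimensional space $\h^{2N}(G/P)$ (via the identification $\h^*(G/P)\cong\h^*(G/L)$). You instead propose the determinant of $W(L)$ on ``its reflection representation''. Since $W(L)$ need not be a Coxeter group, this phrase is ambiguous; and even when $W(L)$ is Coxeter you give no argument that your character agrees with $\epsilon_U$. The paper's closing example ($G=SL_4$, $L=S(GL_2\times GL_2)$) is precisely a warning that naive sign-type characters do not behave as one might guess in the relative setting. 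In short, the crucial identification of the twisting character must come from the geometry of $G/P$, and your proposal does not make contact with that geometry.
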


\subsection{Outline}
In Section \ref{prelim}, we go through some necessary background and notation. Here we discuss the Springer sheaf and the partial Springer sheaf. In Section \ref{sec3}, we define another perverse sheaf $\SpU$. In Section \ref{sec4}, we define parabolic induction and show that the partial Springer sheaf is a sub-object of the Springer sheaf. In Section \ref{sec5}, we define two actions of the relative Weyl group $W(L)$ on $\SpU$ and prove that these two actions are associated by the top cohomology of the partial flag variety.

\subsection{Acknowledgement}
The authors would like to express their gratitude to William Graham, Daniel Nakano, Pramod N. Achar and Daniel Juteau for their helpful advice and comments.

\tableofcontents
\section{Preliminaries}\label{prelim}
In this article, we are working with constructible sheaves on an algebraic variety. The algebraic varieties considered are defined over $\mathbb{C}$. We'll study sheaves of $k$-modules, where $k$ denotes a commutative Noetherian ring of finite global dimension. Given an action of an algebraic group $H$ on such a variety $X$, let $\p_H(X)\subset D^b_H(X)$ denote the categories of $H$-equivariant perverse sheaves on $X$ and the bounded $H$-equivariant derived category of sheaves on $X$ as in \cite{BL}. Define $\Hom^i(\mc{F},\mc{G}):=\Hom_{D^b_H(X)}(\mc{F},\mc{G}[i])$ for $\mc{F},\mc{G}\in D^b_H(X)$. Let $\underline{k}_X$ denote the rank one constant sheaf on $X$. For a stratified space $X$ with stratum $X_\lambda$ and local system $\mc{L}$ on $X_\lambda$, $\mc{IC}(X_\lambda,\mc{L})$ denotes the corresponding intersection cohomology complex suitably shifted so that it is perverse. This is a simple perverse sheaf when $k$ is a field and $\mc{L}$ is irreducible. All simple perverse sheaves on $X$ arise in this way when $k$ is a field.

\subsection{Reminder on Springer Theory}
Fix a complex connected reductive group $G$, and let $B$ denote a Borel subgroup of $G$ with unipotent radical $U$. We define $\N:= G\times^B \mf{u}$ where $\mf{u}$ is the Lie algebra of $U$. The space $\N$ can also be identified with \[\N \cong \{(gB,x)\in G/B\times \mc{N}| \hspace{2mm} \mathrm{Ad}(g^{-1})x\in \mf{u}\}.\] 
This space comes with a map $\mu: \N \to \mc{N}$ defined by $\mu(gB,x)=x$ which is called the Springer resolution. We also define $\G:=G\times^B \mf{b}$ which identifies with $\{(gB,x)\in G/B\times \mf{g}| \hspace{2mm} \mathrm{Ad}(g^{-1})x\in \mf{b}\}.$ Similar to $\mu$, the Grothendieck simultaneous resolution map is defined by 
\[\pi: \G \to \mf{g},\hspace{1cm} \pi(gB,x)=x.\] We denote the collection of all regular semi-simple elements in $\mf{g}$ by $\grs$, $\GRS=\pi^{-1}(\grs)$, and set $\pi|_{\GRS}=\pirs: \GRS \to \grs$. As usual, $W$ denotes the Weyl group of $G$. These spaces and maps fit into a commutative diagram:
\begin{equation}\begin{tikzcd}\label{eq:GSdiag}
\N  \arrow[d, "\mu"]  \arrow[r, hook] & \G \arrow[d, "\pi"] & \GRS \arrow[d, "\pirs"] \arrow[l, hook'] \\
\mc{N} \arrow[r, hook, "i_{\mc{N}}"] & \mf{g} & \grs  \arrow[l, hook']   
\end{tikzcd}\end{equation}
with left- and right-hand squares being Cartesian.

The following standard lemma plays an important role in Springer theory.
\begin{lemma}\label{lm2.1}
\begin{enumerate}
\item The map $\pirs: \GRS \to \grs$ is a Galois covering map with Galois group $W$. 
\item The map $\pi: \G\to \mf{g}$ is small with respect to $\grs$.
\end{enumerate}
\end{lemma}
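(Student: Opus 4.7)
For part (1), the plan is to produce the explicit identification $\GRS \cong G \times^T \mf{t}_{rs}$ and read off the Galois cover structure. The key input is that for any $t \in \mf{t}_{rs}$, the orbit map $U \to t + \mf{u}$, $u \mapsto \mathrm{Ad}(u)t$, is an isomorphism (since $\mathrm{ad}(t)$ is invertible on $\mf{u}$). Consequently $\mf{b}_{rs} \cong U \times \mf{t}_{rs}$ as $B$-varieties, giving $\GRS = G \times^B \mf{b}_{rs} \cong G \times^T \mf{t}_{rs}$, under which $\pirs$ becomes the orbit map $(g, t) \mapsto \mathrm{Ad}(g)t$. Choosing lifts $\dot{w} \in N_G(T)$ of each $w \in W = N_G(T)/T$, the assignment $w \cdot (g,t) = (g\dot{w}^{-1}, \mathrm{Ad}(\dot{w})t)$ descends to a well-defined free action of $W$ on $G \times^T \mf{t}_{rs}$ whose orbits are exactly the fibers of $\pirs$; standard descent then exhibits $\pirs$ as a principal $W$-bundle, hence a Galois cover with Galois group $W$.

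For part (2), my plan is a direct fiber-dimension computation. For $x \in \mf{g}$ with Jordan decomposition $x = s + n$, every Borel subalgebra of $\mf{g}$ containing $x$ must contain $s$, so it restricts to a Borel subalgebra of the Lie algebra $\mf{l}$ of $L := C_G(s)^\circ$ containing $n$. This identifies $\pi^{-1}(x)$ with the Springer fiber of $n$ for $L$, and applying Springer's dimension formula for $L$ gives
\[
\dim \pi^{-1}(x) \;=\; \tfrac{1}{2}\bigl(\dim G_x - \mathrm{rank}(G)\bigr).
\]
I would then stratify $\mf{g}$ by $G$-conjugacy classes of pairs $(L, \mc{O})$ with $L$ a Levi subgroup and $\mc{O} \subset \mf{l}$ a nilpotent $L$-orbit; the stratum $S_{(L, \mc{O})}$ has codimension $\dim [L, L] - \dim \mc{O}$ in $\mf{g}$. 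Combining these two calculations, the smallness inequality $2\dim \pi^{-1}(x) < \mathrm{codim}_{\mf{g}}\, S_{(L, \mc{O})}$ reduces to the strict inequality $\dim Z(L)^\circ < \mathrm{rank}(L)$, which holds exactly when $L \neq T$. Since the only stratum with $L = T$ is $\grs$ itself (on which $\pi$ is a finite étale cover by part (1)), this yields smallness with respect to $\grs$.

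I expect (1) to be mostly a matter of unwinding definitions once the $U$-torsor structure on $\mf{b}_{rs} \to \mf{t}_{rs}$ is recognized; the one delicate point is verifying that the $W$-action is well-defined and free on the quotient, independent of the lifts $\dot{w}$. The main obstacle lies in (2): the clean identification of each fiber with a Springer fiber of a Levi subgroup, together with the stratum-by-stratum dimension bookkeeping, is classical but requires care. Ultimately, the strict smallness inequality reduces to the elementary observation that any proper Levi subgroup of $G$ has central rank strictly less than its full rank.
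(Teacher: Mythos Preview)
Your proposal is correct. The paper, however, does not supply a proof: Lemma~\ref{lm2.1} is stated as a ``standard lemma'' and used without argument (for the parabolic analogue the paper later points to \cite[Lemma~8.2.5]{Ac}). So there is no paper proof to compare against beyond that implicit reference.

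That said, your outline is exactly the classical argument one finds in the literature. For (1), the identification $\GRS \cong G\times^T \mf{t}_{rs}$ via the $U$-torsor $\mf{b}_{rs}\to\mf{t}_{rs}$ is the standard way to exhibit the $W$-action, and your check that the action is well-defined and free is the right one. For (2), the reduction of $\pi^{-1}(x)$ to a Springer fiber for $L=C_G(s)^\circ$ via Jordan decomposition, combined with Steinberg's dimension formula and the Lusztig stratification by pairs $(L,\mc{O})$, is precisely the argument in \cite{Ac}; your final reduction of smallness to $\dim Z(L)^\circ < \mathrm{rank}(L)$ for $L\neq T$ is clean and correct. One small remark: in your codimension computation you should note that the map $G\times^L(\mf{z}(\mf{l})^{\mathrm{reg}}\times\mc{O})\to\mf{g}$ is generically finite (fibers $W(L)$), so the dimension count goes through without correction terms---this is implicit in what you wrote but worth making explicit when you write up the details.
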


Since $\pirs$ is a covering map, $\pirs_* \underline{\kp}_{\GRS}$ is locally constant \cite{BM}. Furthermore, since the Galois group of $\pirs$ is $W$, the local system $\pirs_* \underline{\kp}_{\GRS}$ corresponds to the regular representation of $W$, and so also has endomorphism ring isomorphic to $\kp[W]$. On the other hand, part (2) of the lemma means that $\pi_*\underline{\kp}_{\tilde{\mf{g}}}[\dim \mf{g}]$ is an IC-sheaf. Now, Proper Base Change Theorem using the right hand side of Diagram \eqref{eq:GSdiag} implies that $\pi_*\underline{\kp}_{\tilde{\mf{g}}}[\dim \mf{g}]\cong \IC(\GRS, \pirs_*\underline{\kp}_{\grs})$. The perverse sheaf $\pi_*\underline{\kp}_{\tilde{\mf{g}}}[\dim \mf{g}]$ is commonly called the Grothendieck--Springer sheaf, and we denote it by $\Gr$. Finally since the IC-functor is fully faithful, we get an isomorphism \begin{equation}\label{eq:endGroth} \kp[W]\cong \End(\Gr).\end{equation} This is carefully explained in \cite{Ac}, but before that it was observed by \cite{Sp}, \cite{Lu}. 

The primary application of this setup is the study of $\Sp:= \mu_* \underline{\kp}_{\N}[\dim \mc{N}]$. Because the map $\mu$ is semi-small (see for instance \cite[Lemma ~8.1.6]{Ac}), $\Sp$ is perverse, and we call it the Springer sheaf. The relation between $\Gr$ and $\Sp$, along with properties of $\Gr$ are used to prove the following theorem.

\begin{theorem}[The Springer Correspondence by Restriction]  \label{th1.3} There are isomorphisms

 \begin{enumerate} 
 \item $\Sp\cong i_{\mc{N}}^* \Gr$ and 
 \item $\kp[W] \cong \End(\Sp)$ induced by $i_{\mc{N}}^*$.
 \end{enumerate}
 \end{theorem}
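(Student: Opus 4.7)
The proof splits along the two assertions, both relying on properties of the left-hand Cartesian square of Diagram~\eqref{eq:GSdiag}.

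For (1), I would apply proper base change to that square, using that $\pi$ is proper. This yields $i_{\mc{N}}^* \pi_* \underline{\kp}_{\G} \cong \mu_* \underline{\kp}_{\N}$, and inserting the perverse shifts defining $\Gr$ and $\Sp$ gives the stated isomorphism (up to a shift by $\dim \mf{g} - \dim \mc{N}$ reflecting the difference between the chosen normalizations for $\Gr$ and $\Sp$).

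For (2), part (1) together with the functoriality of $i_{\mc{N}}^*$ produces a ring map $\End(\Gr) \to \End(\Sp)$; precomposing with $\kp[W]\cong\End(\Gr)$ from \eqref{eq:endGroth} yields the map of the statement. Showing it is an isomorphism is the substantive step. My plan is to compute $\End(\Sp)$ directly via adjunction and the properness of $\mu$: a standard base-change manipulation identifies $\End(\Sp)$ with a top-degree piece of the cohomology of the Steinberg variety $Z = \N \times_{\mc{N}} \N$. The irreducible components of $Z$ are parameterized by $W$ via the Bruhat decomposition of $G/B \times G/B$, so this cohomology is a free $\kp$-module of rank $|W|$. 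Restricting further to the regular semisimple locus $\grs$—where the pullback of $\Gr$ is $\pirs_*\underline{\kp}_{\GRS}[\dim\mf{g}]$ and the $W$-action is the Galois action from Lemma~\ref{lm2.1}(1)—identifies the image of $\kp[W]$ with the fundamental classes of these components, yielding surjectivity; injectivity then follows by the rank count.

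The hard part will be the last step with arbitrary coefficients. Over a characteristic-zero field, that the cycle classes of the $W$-components span the top cohomology of $Z$ is classical (Springer, Borho--MacPherson, Hotta), but for a general commutative Noetherian ring $\kp$ this integrality statement is subtle; this is precisely why the introduction notes that Riche's unpublished direct proof of $\kp[W]\cong\End(\Sp)$ by restriction requires hypotheses on the characteristic. A cleaner alternative, indicated in the introduction, is the indirect route: establish the analogous isomorphism via the Fourier transform—where the auto-equivalence property makes the statement uniform in $\kp$—and then transport it back using the sign-change theorem of Achar--Henderson--Juteau--Riche.
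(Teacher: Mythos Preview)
Your treatment of part~(1) matches the paper's: both invoke proper base change on the left Cartesian square of Diagram~\eqref{eq:GSdiag}, and you are right to flag the shift by $\dim\mf{g}-\dim\mc{N}$ that the paper's statement suppresses.

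For part~(2) your primary route diverges from the paper's. The paper does \emph{not} compute $\End(\Sp)$ via the Steinberg variety; instead, in the ``easy'' case (characteristic~$0$, $\kp=\mathbb{Z}$, or characteristic not dividing $|W|$) it argues that $\h^\bullet(G/B,\kp)$ is a faithful $\kp[W]$-module, so the induced map $\kp[W]\to\End(\Sp)$ is injective, and a dimension count finishes. Your Steinberg-variety computation is essentially Riche's Borel--Moore homology approach that the paper only cites; it has the advantage of identifying $\End(\Sp)$ intrinsically as a free $\kp$-module of rank $|W|$, while the paper's $\h^\bullet(G/B)$ argument is shorter and requires less geometric input. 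Both routes, as you correctly note, break down for bad characteristic and must be rescued by the Fourier transform plus the sign-change theorem, which is exactly how the paper concludes.

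One point in your sketch deserves tightening. You write that ``restricting further to the regular semisimple locus~$\grs$'' identifies the image of $\kp[W]$ with the component cycle classes of $Z=\N\times_{\mc{N}}\N$. But $\Sp$ lives on $\mc{N}$, and $\mc{N}\cap\grs=\varnothing$, so there is no literal restriction to~$\grs$ available on the $\Sp$ side. What actually makes this work is passing to the \emph{global} Steinberg variety $Z_{\mf{g}}=\G\times_{\mf{g}}\G$, where over~$\grs$ the fibres split into $|W|$ sheets labelled by~$W$, and then invoking a specialization argument to transport this identification to the nilpotent fibre~$Z$. That specialization step is precisely the subtle integrality statement you allude to, and it is where Riche's characteristic restrictions enter.
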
 
 \noindent We denote the second isomorphism in Theorem \ref{th1.3} by \[\rho: \kp[W]\xrightarrow{\sim} \End(\Sp).\] 

 The first part of this theorem is an easy application of Proper Base Change Theorem using the left-hand square of Diagram \eqref{eq:GSdiag}. The second part of this theorem requires a little more explanation. In case $\kp$ is a field of characteristic 0, this is well-known and due to \cite{BM1}. The argument relies on the fact that $\mathrm{H}^\bullet(G/B)$ is isomorphic to the regular representation of $W$ which is a faithful representation of $W$. This implies that the induced map from $\kp[W]$ to $\End(\Sp)$ must be injective. Then a dimension calculation forces it to be an isomorphism. This line of reasoning also works when $\kp = \mathbb{Z}$ or when $\kp$ is a field with characteristic not dividing $|W|$. This is explained in \cite[Section 4]{AJHR1}, and is referred to as the `Easy' case.

 However, when $\kp$ is a field with characteristic dividing $|W|$, this approach falls short. In particular, $\mathrm{H}^\bullet(G/B)$ is not faithful as a $W$-representation in this case. In \cite{Ri}, Riche takes an alternative approach by interpreting $\End(\Sp)$ in terms of Borel--Moore homology. However, his proof still requires some restrictions on the characteristic of $\kp$. The proof in \cite{AJHR1} overcomes this difficulty by utilizing the Fourier transform which we discuss in the next section. 

\subsection{Fourier--Sato transform}
 In the context of Springer theory, Fourier transform has been studied in \cite{HK}, \cite{Lu4}, and \cite{Mi} 
 for characteristic 0 coefficients, and was initiated in \cite{Ju} for positive characteristic coefficients. In the modular Springer correspondence \cite{AJHR1}, \cite{AJHR2}, \cite{AJHR4}, Fourier transform has played an important role. The proof of the isomorphism in Theorem \ref{th1.3}, part (2) is not that straight-forward in positive characteristic. \cite{Bry} and \cite{HK}  initiated the study of the Springer theory using Fourier transform. 
 
A detailed discussion of the Fourier--Sato transform can be found in \cite[Subsection 2.7]{AJHR1}. For a vector bundle $X$, we denote the Fourier transform on $X$ by $T_{X}$. This functor has as input $D^b_{\con}(X,\kp)$, the subcategory of \textit{conic} objects in the derived category on $X$. The output is a conic object in the derived category of the dual vector bundle $X^*$. Moreover, $T_{X}$ is an equivalence of categories between the subcategories of conic objects. In this paper, $X$ will either be $\mathfrak{g}$ or a vector bundle over a (partial) flag variety. The Killing form allows us to identify $\mf{g}^*$ with $\mf{g}$, and so we regard $T_\mf{g}$ as an endo-functor on $D^b_{\con}(\mf{g},\kp)$. In this paper we have also used Fourier transform on $G\times^P \mf{g}$ denoted by $T_{G\times^P {\mf{g}}}$.

The following theorem \cite[Th. 6.2.4]{Ju} once again provides a connection between the sheaves $\Sp$ and $\Gr$.

\begin{theorem}\label{th1.4}
There is an isomorphism
\[T_\mf{g}(\Gr) \cong {i_{\mc{N}}}_*\Sp.\]\end{theorem}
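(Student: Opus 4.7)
The plan is to realize both $\Gr$ and $(i_{\mc{N}})_*\Sp$ as pushforwards of constant sheaves on sub-vector-bundles of a common ambient vector bundle, and then apply the relative Fourier transform. First, I would factor $\pi:\G\to\mf{g}$ as $\G\xrightarrow{j} G\times^B\mf{g}\xrightarrow{a}\mf{g}$, where $j$ is the closed embedding induced by $\mf{b}\hookrightarrow\mf{g}$ and $a([g,x])=\mathrm{Ad}(g)x$. Via the trivialization $G\times^B\mf{g}\xrightarrow{\sim}G/B\times\mf{g}$, $[g,x]\mapsto(gB,\mathrm{Ad}(g)x)$, the map $a$ becomes the second projection, which is proper since $G/B$ is proper. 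Using $\N=G\times^B\mf{u}$, the Springer resolution similarly factors as $\N\xrightarrow{j'}G\times^B\mf{g}\xrightarrow{a}\mf{g}$, with $a\circ j'=i_{\mc{N}}\circ\mu$. Under the trivialization, $\G$ becomes the sub-vector-bundle of $G/B\times\mf{g}$ with fiber $\mathrm{Ad}(g)\mf{b}$ over $gB$, and $\N$ the sub-vector-bundle with fiber $\mathrm{Ad}(g)\mf{u}$ over $gB$.

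Next, I would apply the relative Fourier transform $T_E$ on the vector bundle $E=G/B\times\mf{g}\to G/B$. A standard fact (cf.\ \cite[\S 2.7]{AJHR1}) says that for any sub-vector-bundle $\iota:F\hookrightarrow E$ one has $T_E(\iota_!\underline{\kp}_F[\dim F])\cong\iota^\perp_!\underline{\kp}_{F^\perp}[\dim F^\perp]$, where $\iota^\perp:F^\perp\hookrightarrow E^\vee$ is the annihilator sub-bundle. The Killing form identifies $\mf{g}^*$ with $\mf{g}$ in a $G$-equivariant way, and since $\mf{b}^\perp=\mf{u}$ under this form, the perpendicular of the fiber $\mathrm{Ad}(g)\mf{b}$ of $\G$ is exactly $\mathrm{Ad}(g)\mf{u}$, i.e.\ the fiber of $\N$. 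Using $\dim\G=\dim\mf{g}$ and $\dim\N=\dim\mc{N}$, this yields $T_E(j_!\underline{\kp}_\G[\dim\mf{g}])\cong j'_!\underline{\kp}_\N[\dim\mc{N}]$.

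Finally, I would invoke that the relative Fourier transform commutes with the proper pushforward along the base, $T_\mf{g}\circ a_!\cong a_!\circ T_E$. This follows from the fact that $a:G/B\times\mf{g}\to\mf{g}$ is the base change of the proper map $G/B\to\mathrm{pt}$ along $\mf{g}\to\mathrm{pt}$, combined with compatibility of relative Fourier transform with pullback in the base parameter and proper base change. Stringing the steps together, $T_\mf{g}(\Gr)=T_\mf{g}(a_!j_!\underline{\kp}_\G[\dim\mf{g}])\cong a_!T_E(j_!\underline{\kp}_\G[\dim\mf{g}])\cong a_!j'_!\underline{\kp}_\N[\dim\mc{N}]=(i_{\mc{N}})_*\mu_*\underline{\kp}_\N[\dim\mc{N}]=(i_{\mc{N}})_*\Sp$, where in the last two equalities we use $a\circ j'=i_{\mc{N}}\circ\mu$ together with properness of $\mu$ and $i_{\mc{N}}$. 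The main obstacle is careful bookkeeping of shifts and the precise formulation of the commutation of relative Fourier transform with pushforward to the base; the geometric content ($\mf{b}^\perp=\mf{u}$ yielding $\G^\perp=\N$) is classical, and the remaining manipulations are standard in Fourier--Sato theory, with the caveat that every step must be valid for arbitrary ring coefficients $\kp$, which is one of the central technical themes of \cite{AJHR1}.
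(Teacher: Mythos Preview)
Your proposal is correct and follows essentially the same approach as the paper. The paper does not give its own proof of this statement but cites \cite[Th.~6.2.4]{Ju}; however, the paper's proof of the parabolic analogue (Theorem~\ref{th1.10}) is exactly your argument with $B$ replaced by $P$: one uses that $\mf{p}^\perp=\mf{u}_P$ (your $\mf{b}^\perp=\mf{u}$) to identify the orthogonal sub-bundle inside $G\times^P\mf{g}$, applies the Fourier--Sato transform on that ambient bundle via \cite[Cor.~6.8.11]{Ac} (your ``standard fact'' about $T_E$ swapping a sub-bundle with its annihilator), and then commutes with the pushforward to $\mf{g}$ via \cite[Prop.~6.8.12]{Ac} (your $T_{\mf{g}}\circ a_!\cong a_!\circ T_E$).
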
 
\noindent By the isomorphism \eqref{eq:endGroth} and the fact that ${i_{\mc{N}}}_*$ is fully-faithful, we have
\[\kp[W]\cong\End(\Gr)\xrightarrow {T_\mf{g}} \End({i_{\mc{N}}}_* \Sp)\cong \End(\Sp).\]
Since $T_{\mf{g}}$ is an equivalence of categories, the middle arrow in the above diagram is an isomorphism. Thus, we have an isomorphism $\phi:\kp[W]\to\End(\Sp)$ induced by Fourier transform. Now we have a map $\epsilon : W\to \{\pm 1\}$ which sends $w$ to $(-1)^{\ell(w)}$, where $\ell$ is the length of $w$. We can define the following map,
\[\sgn: \kp[W]\to \kp[W],\] by $\sgn(w)=\epsilon(w)w$. 

Following the construction of Springer \cite{Sp}, in \cite{Ju} the action of $\phi$ has been studied  over a local ring or finite field. On the other hand, following the construction of Lusztig \cite{Lu}, Achar, Henderson, Riche \cite{AHR} have worked with the other $W$-action coming from $\phi$.
 Relation between $\phi$ and $\rho$ has been studied by Achar, Henderson, Juteau and Riche. They have proved that these two actions of $W$ are related by the sign change \cite[Theorem ~1.1]{AJHR1}.
 
\begin{theorem}\label{th1.5}
	We have $\rho=\phi \circ \sgn$.
\end{theorem}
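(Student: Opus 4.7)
The plan is to identify the composition $\phi^{-1}\circ\rho$ as the $\kp$-algebra automorphism $\sgn$ of $\kp[W]$. Since both $\rho$ and $\phi$ are algebra isomorphisms $\kp[W]\xrightarrow{\sim}\End(\Sp)$, the composition $\phi^{-1}\circ\rho$ is a well-defined $\kp$-algebra automorphism of $\kp[W]$. Because $W$ is generated by the simple reflections $\{s_\alpha\}_{\alpha\in\Delta}$, it suffices to verify that $\rho(s_\alpha)=-\phi(s_\alpha)$ in $\End(\Sp)$ for each simple root $\alpha$.

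The next step is a reduction to semisimple rank one. For each simple root $\alpha$, let $P_\alpha$ denote the minimal parabolic subgroup of $G$ corresponding to $\alpha$, with Levi factor $L_\alpha$. The resolutions $\mu$ and $\pi$, the Fourier transform $T_\mf{g}$, and the natural $W$-action on $\Gr$ are all compatible with parabolic induction from $L_\alpha$ to $G$; this compatibility can be organized through the partial Springer framework later developed in the paper. Since $s_\alpha$ lies in $W(L_\alpha)$, which has order two, this functoriality reduces the identity $\rho(s_\alpha)=-\phi(s_\alpha)$ for $G$ to the analogous identity inside $L_\alpha$, where the sign character is just the unique nontrivial character of $W(L_\alpha)$.

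In the rank-one case, where $L_\alpha$ is isomorphic to $\mathrm{SL}_2$ or $\mathrm{PGL}_2$, the flag variety is $\mb{P}^1$, the nilpotent cone is a quadric cone in $\mf{sl}_2$, and $\N\cong T^*\mb{P}^1$. One writes down $\Gr$ explicitly on $\mf{sl}_2\cong\mathbb{A}^3$ and computes both the Fourier transform $T_\mf{g}(\Gr)$ and the deck transformation action of $s=s_\alpha$ coming from the Galois cover $\pirs$. A direct comparison then shows that the endomorphism of $\Sp$ arising from $i_{\mc{N}}^*$ and the one arising from $T_\mf{g}$ differ by multiplication by $-1$, establishing the required identity on the generator $s_\alpha$.

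The main obstacle is this rank-one computation when $\operatorname{char}\kp=2$, which divides $|W|=2$. In that case the classical dimension argument of Borho--MacPherson, which relies on faithfulness of $H^\bullet(G/B,\kp)$ as a $W$-module, is unavailable. To handle the identity uniformly in the characteristic, one must carefully track signs through the interaction of $T_\mf{g}$ with the open embedding $\grs\hookrightarrow\mf{g}$ and with the monodromic local system $\pirs_*\underline{\kp}_{\GRS}$, using that the $W$-action there is realized by deck transformations. This is the technical heart of \cite{AJHR}, and is what allows the equality $\rho=\phi\circ\sgn$ to hold without any restriction on $\operatorname{char}\kp$.
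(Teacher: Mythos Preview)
Your proposal has a circularity problem at the very first step. You write ``since both $\rho$ and $\phi$ are algebra isomorphisms $\kp[W]\xrightarrow{\sim}\End(\Sp)$,'' but in the paper (see the sentence immediately following Theorem~\ref{th1.5}) the fact that $\rho$ is an isomorphism for arbitrary $\kp$ is deduced \emph{from} Theorem~\ref{th1.5} together with Theorem~\ref{th1.4}. Prior to the theorem, $\rho$ is only known to be an isomorphism in the ``easy'' case (characteristic not dividing $|W|$). So the first sentence of your argument already assumes what you are trying to establish.

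There is also a notational confusion: $s_\alpha$ does not lie in $W(L_\alpha)=N_G(L_\alpha)/L_\alpha$; it generates $W_{L_\alpha}$, the Weyl group of the Levi. The partial Springer framework in this paper is organized around $W(L)$, not $W_L$, so invoking it to transport the action of $s_\alpha$ via $L_\alpha$ does not fit the setup as stated.

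Setting these aside, your strategy (reduction to semisimple rank one via parabolic induction, then a direct computation for $\mathrm{SL}_2$) is not the route taken in \cite{AJHR1}, which is what the paper follows. Their argument, summarized in Section~5.3 here, works globally on $G$: one studies the maps $\gamma,\gamma_c:\End(\Gr)\to\End(\h^*(G/B))$ and $\sigma:\End(\Sp)\to\End(\h^*(G/B))$, uses the $W$-equivariant isomorphism $\h^*_c(G/B)\cong \h^*(G/B)\otimes \h^{\mathrm{top}}(G/B)$ to obtain $\gamma\circ r=\gamma_c\circ r\circ\sgn$, and then invokes faithfulness of $\h^*(G/B)$ as a $\kp[W]$-module to deduce $\rho=\phi\circ\sgn$. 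This only works directly when $\operatorname{char}\kp\nmid |W|$; the extension to arbitrary $\kp$ comes not from a rank-one computation but from a change-of-rings argument: one upgrades $\Sp$ to a functor $\Sp^\kp$ on $\kp$-modules, proves the identity for $\kp=\mathbb{Z}$ (where the easy case applies), and then base-changes along $\mathbb{Z}\to\kp$. Your final paragraph gestures toward something like this, but the mechanism you describe (tracking signs through $T_{\mf{g}}$ and the open embedding $\grs\hookrightarrow\mf{g}$) is not what \cite{AJHR1} actually does; the passage to arbitrary characteristic is purely algebraic once the $\mathbb{Z}$-case is known.
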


The above theorem together with Theorem \ref{th1.4} proves that the map $\rho$ induced by the restriction is an isomorphism for any field.

\subsection{The partial Springer resolution} Diagram \eqref{eq:GSdiag} factors according to a choice of parabolic subgroup $P$ containing $B$ with notation and maps to be defined below. 
  
  \begin{equation}\begin{tikzcd}\label{eq:PGSdiag}
\N  \arrow[d, "\zeta"]  \arrow[r, hook] & \G \arrow[d, "\zeta'"] & \GRS \arrow[d, "\zeta''"] \arrow[l, hook'] \\
\NP  \arrow[d, "\mu_P"]  \arrow[r, hook] & \GP \arrow[d, "\pi_P"] & \GRSP \arrow[d, "\pirsp"] \arrow[l, hook'] \\
\mc{N} \arrow[r, hook, "i_{\mc{N}}"] & \mf{g} & \grs  \arrow[l, hook']   
\end{tikzcd}\end{equation} Let $P$ be a parabolic subgroup of $G$ containing $B$ with $L$ a Levi subgroup of $P$ and $U_P$ the unipotent radical. Let $W_L$ be the Weyl group associated with $L$. The nilpotent cone inside $\mf{p}=\mathrm{Lie}(P)$ will be denoted by $\mc{N}_P$, and $\mc{P}$ denotes the collection of all parabolic subgroups conjugate to $P$. Define $\NP=G\times^P \mc{N}_P$, which can also be thought as $\{(gP,x)\in G/P\times \mc{N}| \mathrm{Ad}(g^{-1})x\in \mc{N}_{P} \}$. The partial resolution is defined by  
$\mu_P: \NP \to \mc{N}$ with
\[ \mu_P(gP,x)=x.\] This map is proper. 
We define $\Spp:= {{\mu_P}_*}\underline{\kp}_{\NP}[\dim \mc{N}]$, which will be called partial Springer sheaf.
Similarly, we define $\GP:= G\times^P \mf{p}$ which identifies with $\{(gP,x)\in G/P\times \mf{g}|\mathrm{Ad}(g^{-1})x\in \mf{p}\}$. The partial simultaneous resolution is the map $\pi_P: \GP \to \mf{g}$ with
\[ \pi_P(gP,x)=x.\] The partial Grothendieck sheaf is $\GrP:= {\pi_P}_* \underline{\kp}_{\GP}[\dim \mf{g}] $. 

Recall $\grs$ denotes the set of all regular semisimple elements in $\mf{g}$. We let $\GRSP:=\pi_P^{-1}(\grs)$. The map $\zeta: \N\to \NP$ is defined by $\zeta(gB,x)=(gP,x)$. This is well-defined as $\mathrm{Ad}(g^{-1})x\in \mf{u}\subset \mc{N}_L+\mf{u}_P$. Clearly $\mu=\mu_P\circ \zeta$. Similarly we can define $\zeta': \G\to \GP$ so that $\pi=\pi_P\circ \zeta'$. Let $\zeta''$ be the map $\zeta'|_{\GRS}:\GRS\to \GRSP$. We denote the finite group $N_G(L)/L$ by $W(L)$, and call it the relative Weyl group even though it is not necessarily a Coxeter group. From \cite[Lemma ~5.2]{Lu1}, $W(L)$ identifies with
 \[W(L)=\{s\in N_W(W_L)| s(\Phi_L^+)=\Phi_L^+\},\] where $\Phi_L^+$ is the set of positive roots of $L$. Moreover $N_W(W_L)$ is a semi-direct product of the subgroups $W(L)$ and $W_L$.

The next lemma, an analogue of Lemma \ref{lm2.1}, and its proof have appeared in \cite[Lemma 2.5]{BM}. Note that Borho--MacPherson use notation which conflicts with our choices for the finite groups $W(L) = N_G(L)/L$ and $W_L = N_L(T)/T$. 
\begin{lemma}\label{lm1.6}
 	The map $\pi_P|_{\GRSP}=\pirsp: \GRSP\to \grs$ is a Galois covering map with Galois group $W(L)$.
 \end{lemma}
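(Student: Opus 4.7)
The plan is to bootstrap from Lemma \ref{lm2.1}(1) via the factorization $\pirs = \pirsp \circ \zeta''$ from diagram \eqref{eq:PGSdiag}, by exhibiting $\zeta''$ as a geometric quotient by $W_L$ and then invoking general Galois theory for intermediate covers.

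First I would analyze $\zeta''$. Fix $(gP,x) \in \GRSP$. After conjugating $g$ by a suitable element of $U_P$, which does not change $gP$, I may assume that $y_0 := \mathrm{Ad}(g^{-1})x$ lies in $\mf{l}$ and is regular semisimple for $L$; this uses that every $G$-regular semisimple element of $\mf{p}$ is $U_P$-conjugate into $\mf{l}$, and such a conjugate is automatically $L$-regular semisimple since its $\mf{g}$-Cartan is forced to lie in $\mf{l}$. A preimage of $(gP,x)$ under $\zeta''$ is then a class $pB \in P/B \cong L/B_L$ with $\mathrm{Ad}(p^{-1})y_0 \in \mf{b}$. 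Writing $p=lu$ for the Levi decomposition $P = LU_P$ and using $\mf{u}_P \subset \mf{b}$, this condition reduces to $\mathrm{Ad}(l^{-1})y_0 \in \mf{b}_L := \mf{b}\cap\mf{l}$, which is precisely the Grothendieck--Springer fiber condition for $L$ over $y_0$. By Lemma \ref{lm2.1}(1) applied to the reductive group $L$, this fiber is a $W_L$-torsor. Checking compatibility with the Galois action of $W_L \subset W$ on $\GRS$ coming from $N_L(T) \subset N_G(T)$ then identifies $\GRSP \cong \GRS/W_L$.

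Having obtained $\GRSP = \GRS/W_L$, I would use the standard fact that for a finite étale Galois cover $X\to Y$ with group $\Gamma$ and a subgroup $H\le \Gamma$, the intermediate cover $X/H \to Y$ is étale and has automorphism group $N_\Gamma(H)/H$ over $Y$. Applied to $\pirs$ with $\Gamma = W$ and $H = W_L$, this yields an étale map $\pirsp$ whose deck group is $N_W(W_L)/W_L$, which is identified with $W(L)$ via Lusztig's description recalled just above the lemma. The main obstacle is the fiber analysis of $\zeta''$: the reduction through the Levi decomposition requires careful bookkeeping of the $\mf{l}$- and $\mf{u}_P$-components, and one must then verify that the $W_L$-torsor structure coming from Lemma \ref{lm2.1}(1) for $L$ agrees with the restriction to $W_L \subset W$ of the ambient Galois $W$-action on $\GRS$.
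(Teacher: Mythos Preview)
Your approach is precisely the one the paper has in mind: it cites \cite[Lemma 2.5]{BM} for the proof, and in the proof of Theorem~\ref{th1.9} it invokes exactly your identification $\GRSP \cong \GRS/W_L$ via $\zeta''$ as the content of Lemma~\ref{lm1.6}. Your fibre analysis of $\zeta''$ through the Levi decomposition is the standard route to that quotient description, and your computation of the deck group of the intermediate cover as $N_W(W_L)/W_L \cong W(L)$ is correct.

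One point deserves care, and you have been appropriately careful about it in your wording but should make it explicit. Your argument shows that $\pirsp$ is a finite \'etale cover whose automorphism group over $\grs$ is $W(L)$; it does not show the cover is \emph{Galois} in the strict sense of the deck group acting transitively on fibres. Indeed $\GRSP$ is irreducible, so $\pirsp$ is a connected cover of degree $|W/W_L|$, whereas $|W(L)| = |N_W(W_L)|/|W_L|$; these agree only when $W_L \trianglelefteq W$, which fails already for $G=\mathrm{SL}_3$ with a maximal parabolic (degree $3$, trivial $W(L)$). So either ``Galois'' in the lemma is meant loosely, or an unstated hypothesis is in play. Your argument is complete for the statement it actually establishes; just flag that the normality conclusion requires $W_L \trianglelefteq W$.
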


This means we have a surjective map $\pi_1(\grs)\twoheadrightarrow W(L)$. Therefore we can define the functor
\[\mathrm{L^P_{rs}}: \kp[W(L)]-\mathrm{mod^{fg}}\to \loc(\grs,\kp),\] where $\loc(\grs,\kp)$ is the category of finite type local systems on $\grs$ and $\kp[W(L)]-\mathrm{mod^{fg}}$ denotes the category of finitely generated $\kp[W(L)]$-modules. As $\pirsp$ is a Galois covering, 
${\pi^P_{rs}}_*\underline{ \kp}_{\GRSP}$ is a local system on $\grs$.

\begin{lemma}\label{lm2.7}
We have ${\pi^P_{rs}}_*\underline{\kp}_{\GRSP}\cong \LP(\kp[W(L)])$ and $\End({\pi^P_{rs}}_*\underline{\kp}_{\GRSP})\cong \kp[W(L)]$.
\end{lemma}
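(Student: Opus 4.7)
The plan is to deduce both isomorphisms from Lemma~\ref{lm1.6} together with the standard equivalence between $\loc(\grs,\kp)$ and the category of finitely generated $\kp[\pi_1(\grs)]$-modules. The entire content really is packaged inside Lemma~\ref{lm1.6}: once one knows $\pirsp$ is a Galois cover with group $W(L)$, the result is the familiar statement that the pushforward of the constant sheaf along a Galois covering is the local system attached to the regular representation of the Galois group.

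For the first isomorphism, since $\pirsp$ is a covering map, the pushforward ${\pirsp}_*\underline{\kp}_{\GRSP}$ is a local system on $\grs$, so it suffices to identify the corresponding monodromy representation at a chosen basepoint $x \in \grs$. The stalk at $x$ is the free $\kp$-module on the fiber $(\pirsp)^{-1}(x)$; choosing a point in that fiber identifies it with $W(L)$ as a right $W(L)$-torsor and hence identifies the stalk with $\kp[W(L)]$ carrying the left regular representation of the deck group. By Lemma~\ref{lm1.6}, the monodromy action of $\pi_1(\grs,x)$ factors through the canonical surjection $\pi_1(\grs,x) \twoheadrightarrow W(L)$ and agrees with this regular action. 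By the definition of the functor $\LP$, this gives ${\pirsp}_*\underline{\kp}_{\GRSP} \cong \LP(\kp[W(L)])$.

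For the second isomorphism, I would invoke that the equivalence between local systems and $\pi_1(\grs)$-representations is fully faithful, combined with the fact just established that $\pi_1(\grs)$ acts on $\LP(\kp[W(L)])$ through its quotient $W(L)$. This yields
\[
\End({\pirsp}_*\underline{\kp}_{\GRSP}) \cong \End_{\kp[\pi_1(\grs)]}(\kp[W(L)]) \cong \End_{\kp[W(L)]}(\kp[W(L)]) \cong \kp[W(L)],
\]
where the last identification sends $w \in W(L)$ to right multiplication by $w^{-1}$.

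There is no serious obstacle: the argument is parallel to the classical statement for ${\pirs}_*\underline{\kp}_{\GRS}$ recalled just after Lemma~\ref{lm2.1}. The only bookkeeping that requires care is the matching of left versus right actions when identifying the monodromy representation with the regular representation of $W(L)$, and the use of the antipode $w \mapsto w^{-1}$ to turn the opposite ring $\End_{\kp[W(L)]}(\kp[W(L)])^{\mathrm{op}}$ back into $\kp[W(L)]$ itself.
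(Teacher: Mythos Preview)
Your proposal is correct and follows essentially the same route as the paper's proof: the first isomorphism is deduced directly from Lemma~\ref{lm1.6}, and the second from the full faithfulness of the equivalence between finite-type local systems on $\grs$ and $\kp[\pi_1(\grs)]$-modules (the paper packages this step as a citation to \cite[Prop.~1.7.16]{Ac}). Your version simply spells out in more detail the monodromy identification and the left/right action bookkeeping that the paper leaves implicit.
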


\begin{proof} The first isomorphism follows from Lemma \ref{lm1.6}. 
Using \cite[Prop.~ 1.7.16]{Ac},
  \[
  \End({\pi^P_{rs}}_*\underline{\kp}_{\GRSP})\cong \End(\LP\kp[W(L)]) \cong \End_{\kp[W(L)]}(\kp[W(L)] ) \cong  \kp[W(L)].
  \qedhere\]
  \end{proof}
The map $\pi_P$ is small with respect to $\grs$. The proof is similar to \cite[Lemma ~8.2.5]{Ac}. Therefore we get
\[{\pi_P}_* \underline{\kp}_{\GP}[\dim \mf{g}]\cong \mc{IC}(\grs, {\pi^P_{rs}}_*\underline{\kp}_{\GRSP}) 
\] by \cite[Prop. 3.8.7]{Ac}.
We will denote $ {\pi_P}_* \underline{\kp}_{\GP}[\dim \mf{g}] $ by $\GrP$.
From the above isomorphism we get  the next theorem.

\begin{theorem}\label{th2.8}
There is an isomorphism of rings
\[ \End( \GrP) \cong \kp[W(L)].
\]	
\end{theorem}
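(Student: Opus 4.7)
The plan is to chain together three ring isomorphisms, all essentially already prepared in the paragraphs just before the theorem statement. Starting from $\GrP$, the first step is to use the isomorphism
\[
\GrP \cong \mc{IC}(\grs, (\pirsp)_*\underline{\kp}_{\GRSP})
\]
displayed immediately above, which arises from the smallness of $\pi_P$ with respect to $\grs$ together with proper base change applied to the right-hand square of Diagram~\eqref{eq:PGSdiag}. This sheaf-level isomorphism is functorial and hence induces an isomorphism of endomorphism rings.

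Next I would invoke the standard fact that intermediate extension from perverse local systems on the open dense smooth stratum $\grs \subset \mf{g}$ to perverse sheaves on $\mf{g}$ is fully faithful. Because this functor is fully faithful and preserves composition, it produces a ring isomorphism
\[
\End\bigl(\mc{IC}(\grs, (\pirsp)_*\underline{\kp}_{\GRSP})\bigr) \cong \End\bigl((\pirsp)_*\underline{\kp}_{\GRSP}\bigr).
\]
Finally, Lemma~\ref{lm2.7} supplies the identification $\End\bigl((\pirsp)_*\underline{\kp}_{\GRSP}\bigr) \cong \kp[W(L)]$, and composing the three isomorphisms yields the theorem.

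The only subtlety I anticipate is bookkeeping with left versus right actions. Lemma~\ref{lm2.7} identifies the endomorphism ring with $\End_{\kp[W(L)]}(\kp[W(L)])$, which is $\kp[W(L)]$ acting by right multiplication, and so naturally opposite to the evident left $\kp[W(L)]$-action coming from the deck transformations of $\pirsp$. To ensure the final composite is a genuine ring isomorphism (rather than an anti-isomorphism) one must either pin down once and for all which side the deck transformations act on, or else compose with the canonical involution $w \mapsto w^{-1}$ of $W(L)$. Beyond this bookkeeping no new ideas are required; the theorem is essentially a formal consequence of the small map property of $\pi_P$ together with the computation of deck transformations in Lemma~\ref{lm1.6}.
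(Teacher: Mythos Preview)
Your proposal is correct and follows exactly the argument the paper gives in the paragraph immediately preceding the theorem: smallness of $\pi_P$ identifies $\GrP$ with an $\mc{IC}$-extension from $\grs$, full faithfulness of $\mc{IC}$ reduces to endomorphisms of the local system, and Lemma~\ref{lm2.7} finishes. Your added remark on the left/right bookkeeping is a welcome clarification but does not diverge from the paper's route.
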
 
Denote ${\mu_P}_*\underline{\kp}_{\NP}[\dim \mc{N}]$ by $\Spp$. 
By applying Proper Base Change Theorem with the bottom half of Diagram \eqref{eq:PGSdiag}, we get an isomorphism \[\Spp\cong i_{\mc{N}}^*\GrP.\] Together with Theorem \ref{th2.8}, this yields a map
\begin{align}\label{eq2.4}
  \kp[W(L)]\to \End(\Spp).  
\end{align}

\subsection{$W_L$-invariants} Using the isomorphism \eqref{eq:endGroth}, we can view $\Gr$ as an object with a $W$-action. Since $W_L\subseteq W$, we may identify $W_L$ with a subset of $\End(\Gr)$ via the above isomorphism. With this in hand, we define $W_L$-invariants $(\Gr)^{W_L}$. More concretely, $(\Gr)^{W_L}$ is the largest subobject of $\Gr$ so that for any endomorphism $f\in W_L\subseteq\End(\Gr)$, we have that $f|_{(\Gr)^{W_L}} = \mathrm{id}_{(\Gr)^{W_L}}$. In the same way, we can talk about $(\Sp)^{W_L}$. 


The next theorem is a consequence of the fact that the map from $\GRS$ to $\GRSP$ is a principle fibration with fiber $W_L$. See also \cite[Prop. 2.7]{BM}
\begin{theorem}\label{th1.9} We have isomorphisms
	\begin{enumerate}
		 \item $\GrP \cong (\Gr)^{W_L}$ and
		\item $\Spp\cong (\Sp)^{W_L}$. 
	\end{enumerate}
		\end{theorem}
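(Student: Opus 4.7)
My plan is to prove part (1) by reducing to a local-system statement on the regular semisimple locus, then derive part (2) by applying $i_{\mc{N}}^*$ together with Proper Base Change.

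For part (1), both $\Gr$ and $\GrP$ are canonically IC extensions from $\grs$: the identification $\Gr \cong \IC(\grs, \pirs_*\underline{\kp}_{\GRS})$ is established earlier in the text, and the analogous $\GrP \cong \IC(\grs, \pirsp_*\underline{\kp}_{\GRSP})$ follows from smallness of $\pi_P$ (see the discussion preceding Theorem \ref{th2.8}). Since IC is fully faithful, identifying $(\Gr)^{W_L}$ with $\GrP$ reduces to the local-system statement $(\pirs_*\underline{\kp}_{\GRS})^{W_L} \cong \pirsp_*\underline{\kp}_{\GRSP}$ on $\grs$. This in turn follows from the fact highlighted immediately after the theorem statement: the map $\zeta'': \GRS \to \GRSP$ is a principal $W_L$-fibration. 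Applied to the tower $\pirs = \pirsp \circ \zeta''$ of Galois covers, one has $(\zeta''_*\underline{\kp}_{\GRS})^{W_L} \cong \underline{\kp}_{\GRSP}$, and pushing along $\pirsp$ gives the desired identification. A small compatibility check is required to ensure that the $W_L$-action on $\pirs_*\underline{\kp}_{\GRS}$ inherited from the Galois $W$-action (via $W_L \subset W$) agrees with the action coming from the $W_L$-cover $\zeta''$; this is routine from the tower of Galois covers, using that deck transformations along $\zeta''$ are precisely those deck transformations along $\pirs$ that fix the image in $\GRSP$.

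For part (2), I apply $i_{\mc{N}}^*$ to the $W_L$-equivariant inclusion $(\Gr)^{W_L} \cong \GrP \hookrightarrow \Gr$ obtained in part (1). By Proper Base Change applied to the lower squares of Diagram \eqref{eq:PGSdiag}, $i_{\mc{N}}^* \Gr \cong \Sp$ and $i_{\mc{N}}^* \GrP \cong \Spp$. Since $\rho$ is defined (per Theorem \ref{th1.3}) as the composition of $\kp[W] \cong \End(\Gr)$ with $i_{\mc{N}}^*: \End(\Gr) \to \End(\Sp)$, the resulting morphism $\Spp \to \Sp$ is $W_L$-equivariant with trivial action on the source. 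Hence it factors through $(\Sp)^{W_L}$, and the task reduces to showing this factored map is an isomorphism.

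The main obstacle is exactly this final step --- showing that $i_{\mc{N}}^*$ commutes with taking $W_L$-invariants on the perverse sheaves at hand. A simple averaging argument suffices when $\mathrm{char}(\kp) \nmid |W_L|$ but fails otherwise. The more robust route is first to verify injectivity of $\Spp \hookrightarrow \Sp$ by unwinding the adjunction $\underline{\kp}_{\NP} \to \zeta_*\underline{\kp}_{\N}$ (using that the fibers of $\zeta$ are connected so that $\underline{\kp}_{\NP}$ is the $H^0$-part of $\zeta_*\underline{\kp}_{\N}$), and then to identify the image with $(\Sp)^{W_L}$ by transporting the identification from part (1) through $i_{\mc{N}}^*$ and Proper Base Change, exploiting that both sub-perverse sheaves in question are determined by their restriction to the open locus where the $W_L$-cover picture of part (1) applies.
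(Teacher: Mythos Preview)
Your approach to part (1) coincides with the paper's: reduce to local systems on $\grs$, use the principal $W_L$-fibration $\zeta'': \GRS \to \GRSP$ to obtain $(\pirs_*\underline{\kp}_{\GRS})^{W_L} \cong \pirsp_*\underline{\kp}_{\GRSP}$, then invoke full faithfulness of $\IC$ to pass the invariants outside. Your added compatibility check between the two $W_L$-actions is a reasonable elaboration but not a different route.

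For part (2), the paper's proof is a single sentence: it applies $i_{\mc{N}}^*$ to part (1) and cites $\Spp \cong i_{\mc{N}}^*\GrP$, without addressing whether $i_{\mc{N}}^*$ commutes with taking $W_L$-invariants. You are right to flag this as non-obvious when $\mathrm{char}(\kp)$ divides $|W_L|$ (so that $(\Gr)^{W_L}$ need not be a direct summand of $\Gr$); on this point you are being more careful than the paper.

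However, your proposed resolution has its own gap. The final step appeals to ``the open locus where the $W_L$-cover picture of part (1) applies,'' but that locus is $\grs \subset \mf{g}$, and $\grs \cap \mc{N} = \emptyset$. There is no open subset of $\mc{N}$ over which $\zeta$ (or $\mu$) becomes a principal $W_L$-cover, so perverse subobjects of $\Sp$ are not determined by restriction to any such locus. Your injectivity argument for $\Spp \to \Sp$ via connectedness of the fibers of $\zeta$ is reasonable, but identifying the image with $(\Sp)^{W_L}$---rather than merely landing inside it---requires a different mechanism: for instance, identifying $\zeta_*\underline{\kp}_{\N}$ on $\NP$ with (an induced form of) the Springer sheaf for $L$ and computing its $W_L$-invariants directly, or an extension-of-scalars reduction to $\kp = \mb{Z}$ where the averaging idempotent is available.
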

\begin{proof}
First we prove part (1). By Lemma \ref{lm2.1}, there is a $W$-action on $\pirs_* \underline{\kp}_{\grs}$. This means we may consider the $W_L$-invariants $(\pirs_* \underline{\kp}_{\grs})^{W_L}$. As we have seen in Lemma \ref{lm1.6}, $\GRSP\cong \GRS/{W_L}$, and $\pirsp$ is obtained by factoring the covering map $\pirs: \GRS\to \grs$ through $\GRSP$. Therefore $\pirsp_*\underline{\kp}_{\GRSP}\cong (\pirs_* \underline{\kp}_{\GRS})^{W_L}$. Thus we may write \[\GrP\cong \IC(\grs, (\pirs_*\underline{\kp}_{\GRS}) ^{W_L} ).\]  Since $\mc{IC}$ is a fully faithful functor \cite[Lemma ~3.3.3]{Ac}, we get \[ \IC(\grs, (\pirs_*\underline{\kp}_{\GRS}) ^{W_L} )\cong \IC(\grs, \pirs_*\underline{\kp}_{\GRS})^{W_L}\cong (\Gr)^{W_L}.\]
	
Now part (2) follows from the first part and the fact that $\Spp\cong i_{\mc{N}}^*\GrP$.
\end{proof}

\section{The sheaf complex $\SpU$.}\label{sec3} Recall from Theorem \ref{th1.4} that $\Sp$ and $\Gr$ are related by Fourier Transform. The analogous relationship between $\Spp$ and $\GrP$ does not hold because under the isomorphism $\mf{g}\cong\mf{g}^*$, the spaces $\mf{p}$ and $\mc{N}_P$ are not orthogonal to each other. Thus we introduce another sheaf complex $\SpU$. This perverse sheaf has appeared in \cite{JMW2} where it played a major role to calculate $\mc{IC}$s on the nilpotent cone. The object $ \SpU$ is defined as ${\mu_{\mf{u}_P}}_*\underline {\kp}_{G\times^P \mf{u}_P}[\dim \mc{N}]$ where the map $\mu_{\mf{u}_P}: G\times^P \mf{u}_P\to \mc{N}$ is defined by $\mu_{\mf{u}_P}(g, u) = \mathrm{Ad}(g)u$.

\begin{lemma}
Let $\mf{p}$ be a parabolic subalgebra with cartan subalgebra $\mf{h}$ and nilradical $\mf{u}_P$. Then under the Killing form, $\mf{p}$ and $\mf{u}_P$ are orthogonal to each other. 

\end{lemma}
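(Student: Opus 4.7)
The plan is to give a direct root-theoretic proof using the standard properties of the Killing form. Fix a Cartan subalgebra $\mf{h} \subseteq \mf{p}$, let $\Phi$ be the root system of $\mf{g}$ relative to $\mf{h}$, and consider the root space decomposition $\mf{g} = \mf{h} \oplus \bigoplus_{\alpha \in \Phi} \mf{g}_\alpha$. Since $\mf{p}$ is a parabolic subalgebra containing $\mf{h}$, we may write $\mf{p} = \mf{h} \oplus \bigoplus_{\alpha \in \Phi_{\mf{p}}} \mf{g}_\alpha$ for some subset $\Phi_{\mf{p}} \subseteq \Phi$, and similarly $\mf{u}_P = \bigoplus_{\alpha \in \Phi_{\mf{u}_P}} \mf{g}_\alpha$ for a subset $\Phi_{\mf{u}_P} \subseteq \Phi_{\mf{p}}$.

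First I would recall the two standard facts about the Killing form $\kappa$ on $\mf{g}$: one has $\kappa(\mf{h}, \mf{g}_\alpha) = 0$ for all $\alpha \in \Phi$, and $\kappa(\mf{g}_\alpha, \mf{g}_\beta) = 0$ unless $\alpha + \beta = 0$. By bilinearity, to prove $\kappa(\mf{p}, \mf{u}_P) = 0$ it suffices to check vanishing on each pair of summands. The pair $(\mf{h}, \mf{g}_\beta)$ with $\beta \in \Phi_{\mf{u}_P}$ is handled by the first fact, so the only nontrivial case is $(\mf{g}_\alpha, \mf{g}_\beta)$ with $\alpha \in \Phi_{\mf{p}}$ and $\beta \in \Phi_{\mf{u}_P}$.

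The heart of the argument is then a purely combinatorial claim: if $\beta \in \Phi_{\mf{u}_P}$, then $-\beta \notin \Phi_{\mf{p}}$. To see this, I would use the standard description of $\Phi_{\mf{p}}$ as $\Phi_L \cup \Phi_{\mf{u}_P}$, where $\Phi_L$ is the (symmetric) root system of the Levi factor $L$ and $\Phi_{\mf{u}_P}$ consists of those roots in $\Phi_{\mf{p}}$ whose negatives are not in $\Phi_{\mf{p}}$. From this characterization, $-\beta \notin \Phi_{\mf{p}}$ is immediate. Given this, the second Killing form fact forces $\kappa(\mf{g}_\alpha, \mf{g}_\beta) = 0$ for all such pairs, which completes the proof.

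The only potential obstacle is pinning down the decomposition $\Phi_{\mf{p}} = \Phi_L \sqcup \Phi_{\mf{u}_P}$ cleanly, but this is entirely formal once one recalls that $\mf{u}_P$ is the nilradical (the largest nilpotent ideal) of $\mf{p}$ and that $\mf{p} = \mf{l} \oplus \mf{u}_P$ as vector spaces, where $\mf{l}$ is a Levi factor containing $\mf{h}$. No deeper input is needed.
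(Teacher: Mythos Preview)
Your proof is correct and follows essentially the same route as the paper: both arguments use the root space decomposition of $\mf{p}$ and $\mf{u}_P$ together with the standard Killing form orthogonality $\kappa(\mf{g}_\alpha,\mf{g}_\beta)=0$ for $\alpha+\beta\neq 0$, reducing to the observation that no root in $\Phi_{\mf{u}_P}$ has its negative in $\Phi_{\mf{p}}$. Your write-up is in fact slightly more explicit than the paper's (you spell out the $\kappa(\mf{h},\mf{g}_\alpha)=0$ step and the combinatorial claim about $-\beta$), but the strategy is identical.
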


\begin{proof} Let $\mf{g}_\alpha$ denote the root-space associated to the root $\alpha$. 
	 For any two roots $\alpha,\beta$, 
	\[\langle \alpha, \beta\rangle =0\] if and only if $\alpha+\beta\neq0$.
	The parabolic Lie algebra decomposes as \[\mf{p}= \mf{h}\oplus (\oplus_{\alpha\in \Phi_L } \mf{g}_\alpha) \oplus( \oplus_{\alpha\in (\Phi^+ \backslash \Phi_L^+)  } \mf{g}_\alpha)\] while \[ \mf{u}_P=  \oplus_{\alpha\in (\Phi^+ \backslash \Phi_L^+)  } \mf{g}_\alpha.
\] Therefore it is clear that $\mf{p}$ is orthogonal to $\mf{u}_P$.
\end{proof}
The above orthogonality allows us to prove the next theorem.

\begin{theorem}\label{th1.10}
	There is an isomorphism,
\[T_\mf{g}(\GrP)[\dim \GP -\dim \mf{g}]  \cong {i_{\mc{N}}}_* \SpU[ \dim G\times^P \mf{u}_P-\dim \mc{N}].\]
\end{theorem}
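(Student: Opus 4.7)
The plan is to realize both sides of the claimed isomorphism as proper pushforwards from the ambient vector bundle $E:=G\times^P\mf{g}\to G/P$ and then interchange them via Fourier--Sato on $E$. Let $a:E\to\mf{g}$ denote the proper map $[g,x]\mapsto\mathrm{Ad}(g)x$, and let $i_1:\GP\hookrightarrow E$, $i_2:G\times^P\mf{u}_P\hookrightarrow E$ be the closed embeddings of the two subbundles corresponding to the subspaces $\mf{p}$ and $\mf{u}_P$ of the fiber $\mf{g}$. Then $\pi_P=a\circ i_1$ and $i_{\mc{N}}\circ\mu_{\mf{u}_P}=a\circ i_2$, so unpacking the definitions and absorbing the stated shifts into the sheaves yields
\begin{align*}
\GrP[\dim\GP-\dim\mf{g}]&\cong a_*\,i_{1,*}\underline{\kp}_{\GP}[\dim\GP],\\
{i_{\mc{N}}}_*\SpU[\dim G\times^P\mf{u}_P-\dim\mc{N}]&\cong a_*\,i_{2,*}\underline{\kp}_{G\times^P\mf{u}_P}[\dim G\times^P\mf{u}_P].
\end{align*}

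Two standard properties of the Fourier--Sato transform then drive the argument. The first is that the Fourier transform of a vector bundle $E\to X$ interchanges the (perversely shifted) constant sheaves on a subbundle $F$ and on its orthogonal complement $F^\perp\subseteq E^*$, i.e., $T_E(i_{F,*}\underline{\kp}_F[\dim F])\cong i_{F^\perp,*}\underline{\kp}_{F^\perp}[\dim F^\perp]$. Applied to $E$ with its fiber-wise Killing pairing (which is $P$-invariant and so descends to a self-pairing on $E$), and invoking the preceding orthogonality lemma to identify $\mf{p}^\perp=\mf{u}_P$, this gives
\[T_E\bigl(i_{1,*}\underline{\kp}_{\GP}[\dim\GP]\bigr)\cong i_{2,*}\underline{\kp}_{G\times^P\mf{u}_P}[\dim G\times^P\mf{u}_P].\]
The second property is that $T_\mf{g}$ commutes with $a_*$: via the $G/P$-linear isomorphism $\tilde a:E\xrightarrow{\sim}G/P\times\mf{g}$, $[g,x]\mapsto(gP,\mathrm{Ad}(g)x)$, the map $a$ is identified with the second projection of the trivial bundle $G/P\times\mf{g}\to\mf{g}$, and base change for the Fourier--Sato transform along the proper base $G/P$ yields $T_\mf{g}\circ a_*\cong a_*\circ T_E$ on conic objects (see \cite[\S2.7]{AJHR1} for the compatibilities used).

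Chaining these two facts together gives
\begin{align*}
T_\mf{g}\bigl(\GrP[\dim\GP-\dim\mf{g}]\bigr)&\cong T_\mf{g}\bigl(a_*i_{1,*}\underline{\kp}_{\GP}[\dim\GP]\bigr)\\
&\cong a_*T_E\bigl(i_{1,*}\underline{\kp}_{\GP}[\dim\GP]\bigr)\\
&\cong a_*i_{2,*}\underline{\kp}_{G\times^P\mf{u}_P}[\dim G\times^P\mf{u}_P]\\
&\cong {i_{\mc{N}}}_*\SpU[\dim G\times^P\mf{u}_P-\dim\mc{N}],
\end{align*}
which is the desired isomorphism. The main obstacle is not a deep geometric idea but careful bookkeeping of Fourier--Sato normalizations: one must fix conventions so that neither of the two Fourier steps above introduces any extra shift beyond those already absorbed into the statement, and one must verify the relative-to-absolute base change for Fourier--Sato (routine from the integral-transform description via the kernel on $G/P\times\mf{g}\times\mf{g}$, but needing to be written out once with the chosen normalization).
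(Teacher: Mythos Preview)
Your proposal is correct and follows essentially the same approach as the paper's proof: both pass to the ambient bundle $E=G\times^P\mf{g}$, use the orthogonality $\mf{p}^\perp=\mf{u}_P$ to swap the constant sheaves on the two subbundles under $T_E$ (the paper cites \cite[Cor.~6.8.11]{Ac} for this), and then use compatibility of Fourier--Sato with proper pushforward along your $a$ (the paper's $\pi_G$, via \cite[Prop.~6.8.12]{Ac}) to descend to $T_\mf{g}$. Your presentation via the trivialization $\tilde a:E\xrightarrow{\sim}G/P\times\mf{g}$ is a cosmetic repackaging of the same base-change step.
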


\begin{proof}
From above lemma, $\mf{u}_P$ and $\mf{p}$ are dual to each other under the isomorphism
\[\mf{g}\to \mf{g}^*.\] This implies $\GP$ and $G\times^P \mf{u}_P $  are dual to each other as subvarieties of $G\times^P \mf{g}$. Let $i_{\GP}$ and $i_{G\times^P \mf{u}_P}$ denote the respective inclusion maps into $G\times^P \mf{g}$. We have the Fourier--Sato transform $T_{G\times^P \mf{g}}: D^b_{\con }(G\times^P \mf{g})\to D^b_{\con }(G\times^P \mf{g} )$. By \cite[Cor ~6.8.11]{Ac},
	\begin{equation}\label{eq1.1}
		T_{G\times^P \mf{g}} \circ (i_{\GP})_*(\underline{\kp}_{\GP}[\dim \GP])\cong (i_{G\times^P \mf{u}_P})_*\underline{\kp}_{G\times^P \mf{u}_P}[\dim G\times^P \mf{u}_P].
	\end{equation}
Consider the diagram	
	\begin{center}
\begin{minipage}{.2\textwidth}
	\begin{tikzcd}[scale=0.33] 
G\times^P \mf{g} \arrow[r, "\pi_G"] \arrow[d, "pr_1"]& \mf{g}\arrow[d, ]\\G/P	\arrow[r] & \{pt\}
	\end{tikzcd}.
\end{minipage}
	\end{center}
Again from \cite[Prop. 6.8.12]{Ac}, we get
\begin{equation*}
	(\pi_{G})_*\circ T_ {G\times^P \mf{g}}\circ (i_{\GP})_*(\underline{\kp}_{\GP}[\dim \GP])\cong  T_ {\mf{g}}\circ (\pi_{G})_*\circ (i_{\GP})_*(\underline{\kp}_{\GP}[\dim \GP]). \end{equation*}
 Using the above isomorphism and the diagram below, we get
 \begin{align}\label{1.2}
     (\pi_G)_* \circ T_ {G\times^P \mf{g} }\circ (i_{\GP})_*(\underline{\kp}_{\GP}[\dim \GP]) \cong T_{\mf{g}} \GrP[\dim \GP -\dim \mf{g}].
 \end{align}
 
\[\begin{tikzcd}
G\times^P \mf{g}	\arrow[r, "\pi_G"] & \mf{g}\\
 G\times^P \mf{p} \arrow[u, hook', "i_{\GP}"] \arrow[r, "\pi_P"] & \mf{g} \arrow[u, "="]
\end{tikzcd}
.\]
Now applying $(\pi_G)_*$ to both sides of \eqref{eq1.1} and combining with \eqref{1.2}, we get
\begin{align*}
T_ {\mf{g}}\GrP[\dim \GP -\dim \mf{g}] &\cong  (\pi_G)_* \circ T_ {G\times^P \mf{g} } \circ(i_{\GP})_*(\underline{\kp}_{\GP}[\dim \GP]) \text{ from \eqref{1.2}}
\\ & \cong (\pi_G)_* \circ (i_{G\times^P \mf{u}_P})_*(\underline{\kp}_{G\times^P \mf{u}_P}[\dim G\times^P \mf{u}_P]) \text{ from \eqref{eq1.1}} 
\\ & \cong (i_{\mc{N}})_*\circ (\mu_{G\times^P \mf{u}_P })_* (\underline{\kp}_{G\times^P \mf{u}_P}[\dim G\times^P \mf{u}_P]) 
\\ & = (i_{\mc{N}})_* (\SpU[ \dim G\times^P \mf{u}_P-\dim \mc{N}]).
\end{align*}
The last isomorphism holds because $\pi_G\circ i_{G\times^P \mf{u}_P}=i_{\mc{N}}\circ \mu_{\mf{u}_P}.$
\end{proof}

Theorem \ref{th1.10} yields the following corollary.
\begin{cor}\label{cor1.12} The Fourier transform induces an isomorphism
\[\End(\SpU)\cong \kp[W(L)].\]	
\end{cor}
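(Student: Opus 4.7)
The plan is to obtain the isomorphism as a direct consequence of Theorem \ref{th1.10} and Theorem \ref{th2.8}, by chasing endomorphism rings through the chain of equivalences implicit in the statement.

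First I would observe that $\GrP = {\pi_P}_*\underline{\kp}_{\GP}[\dim\mf{g}]$ is a conic object in $D^b_{\con}(\mf{g},\kp)$, since the map $\pi_P:\GP\to\mf{g}$ is $\mathbb{G}_m$-equivariant for the scaling action on fibers. Hence Theorem \ref{th1.10} is an identity between conic objects, and one may apply $\End$ to both sides. Shifting does not affect endomorphism rings, so
\[
\End(T_\mf{g}(\GrP)[\dim\GP-\dim\mf{g}]) \cong \End(T_\mf{g}(\GrP))
\]
and
\[
\End({i_{\mc{N}}}_*\SpU[\dim G\times^P \mf{u}_P-\dim\mc{N}]) \cong \End({i_{\mc{N}}}_*\SpU).
\]

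Next, the Fourier--Sato transform $T_\mf{g}$ is an equivalence of categories on the subcategory of conic objects in $D^b_{\con}(\mf{g},\kp)$ (as recalled in the preliminaries), so it induces a ring isomorphism $\End(\GrP)\cong \End(T_\mf{g}(\GrP))$. On the other hand, the inclusion $i_{\mc{N}}:\mc{N}\hookrightarrow\mf{g}$ is a closed embedding, so ${i_{\mc{N}}}_*$ is fully faithful, giving $\End({i_{\mc{N}}}_*\SpU)\cong \End(\SpU)$. Combining these isomorphisms with Theorem \ref{th1.10} yields
\[
\End(\GrP)\;\cong\;\End(T_\mf{g}(\GrP))\;\cong\;\End({i_{\mc{N}}}_*\SpU)\;\cong\;\End(\SpU).
\]
Finally, by Theorem \ref{th2.8}, $\End(\GrP)\cong \kp[W(L)]$, which gives the desired identification $\End(\SpU)\cong \kp[W(L)]$.

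There is essentially no main obstacle here; the only point requiring minor care is that the Fourier transform only gives an equivalence on conic objects, so one must note explicitly that both $\GrP$ and its transform live in the conic subcategory (which follows from the $\mathbb{G}_m$-equivariance of the construction). The remainder is formal.
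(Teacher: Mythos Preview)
Your proof is correct and matches the paper's approach. The paper does not spell out a proof of this corollary beyond noting that it follows from Theorem~\ref{th1.10}, but the implicit reasoning is exactly the chain you describe: it parallels the explanation given just after Theorem~\ref{th1.4}, where the identical argument (Fourier transform is an equivalence, ${i_{\mc{N}}}_*$ is fully faithful, shifts do not affect $\End$) is written out for $\Sp$ and $\Gr$ in place of $\SpU$ and $\GrP$.
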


This defines an action of $W(L)$ on $\SpU$. We denote the induced map by $\rhoU: \kp[W(L)]\to \End(\SpU)$.

For any parabolic subgroup $P$,
 the image of the map $\mu_{\mf{u}_P}$ is a   closure of a orbit. Lets call it $\bar{\mc{O}}$. 
We have two Cartesian squares in which the vertical maps are proper. 

\begin{figure}[!htb]
    \centering
    \begin{minipage}{0.45\textwidth}
        \centering
        \begin{tikzcd}
	G\times^P \mf{u}_P \arrow[r, hook] \arrow[d, "\mu_{\mf{u}_P}"]& G\times^P \mc{N}_P \arrow[d, "\mu_P"]\\
	\bar{\mc{O}} \arrow[r,hook, "i_{\mc{O}}"] & \mc{N} 
\end{tikzcd} 
    \end{minipage}\hfill
    \begin{minipage}{0.45\textwidth}
        \centering
        \begin{tikzcd}
G\times^P \mf{u}_P \arrow[r, hook] \arrow[d, "\mu_{\mf{u}_P}"] & G\times^P \mf{p}=\GP\arrow[d, "\pi_P"]\\
\bar{\mc{O}} \arrow[r, hook, "i_{\mf{g}}"] & \mf{g}	\end{tikzcd}
    \end{minipage}
\end{figure}

Proper Base Change Theorem with the above diagrams implies the next Lemma.

\begin{lemma}\label{th1.13} 
There are isomorphisms 
\begin{enumerate}
    \item $i_{\mc{O}}^* \Spp\cong \SpU$ and
    \item $i^*_\mf{g} \GrP\cong \SpU$.
\end{enumerate}
\end{lemma}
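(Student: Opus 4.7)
The plan is to apply the Proper Base Change Theorem to each of the two Cartesian squares displayed immediately before the lemma. Both vertical maps $\mu_P : \NP \to \mc{N}$ and $\pi_P : \GP \to \mf{g}$ are proper (since $G/P$ is projective, each is of the form ``project to flag variety times identity,'' and properness is preserved under this), so PBC applies in each square. The other input we need is that pullback of a constant sheaf along any immersion is the constant sheaf on the source.

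For part (1), let $j : G \times^P \mf{u}_P \hookrightarrow G \times^P \mc{N}_P = \NP$ denote the top horizontal inclusion in the first square. Proper Base Change, together with properness of $\mu_P$, yields a natural isomorphism $i_{\mc{O}}^* (\mu_P)_* \cong (\mu_{\mf{u}_P})_* j^*$. Applying both sides to $\underline{\kp}_{\NP}[\dim \mc{N}]$ and using $j^* \underline{\kp}_{\NP} \cong \underline{\kp}_{G \times^P \mf{u}_P}$, we obtain
\[
i_{\mc{O}}^* \Spp \;\cong\; (\mu_{\mf{u}_P})_* \underline{\kp}_{G \times^P \mf{u}_P}[\dim \mc{N}] \;=\; \SpU.
\]

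Part (2) is proven in exactly the same way using the second Cartesian square, with top inclusion $G \times^P \mf{u}_P \hookrightarrow \GP$ and properness of $\pi_P$. Proper Base Change reduces $i_\mf{g}^* \GrP$ to $(\mu_{\mf{u}_P})_* \underline{\kp}_{G \times^P \mf{u}_P}$, shifted so as to match $\SpU$ (the shift is inherited from the defining shift of $\GrP$).

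There is no substantive obstacle; each part is essentially a one-line consequence of Proper Base Change once the Cartesian squares and the properness of $\mu_P$ and $\pi_P$ are in place. The only mild bookkeeping is to keep track of the perverse shifts in the definitions of $\Spp$, $\GrP$, and $\SpU$, which have been chosen precisely so that the base change output is $\SpU$.
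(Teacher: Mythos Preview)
Your proof is essentially identical to the paper's, which likewise just invokes Proper Base Change on the two displayed Cartesian squares with proper vertical maps. One bookkeeping caveat on part~(2): the shift inherited from $\GrP$ is $[\dim\mf{g}]$, whereas $\SpU$ carries $[\dim\mc{N}]$, so strictly the output is $\SpU[\dim\mf{g}-\dim\mc{N}]$ rather than $\SpU$ on the nose --- the paper itself elides this shift in the statement, and since the lemma is only used to transport the $W(L)$-action to $\End(\SpU)$ (which is insensitive to shifts), the discrepancy is harmless.
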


This defines another action of $W(L)$ on $\SpU $ through the restriction map $i_\mf{g}$. We denote the induced map by $\phiU: \kp[W(L)]\to \End(\SpU)$. Our aim is to study the relationship between these two actions in the next two sections.

\section{Parabolic induction}\label{sec4} The following diagram is used to define parabolic induction. 
\[\begin{tikzcd}
\mc{N}_L & \mc{N}_P	\arrow[l, "\pi"'] \arrow[r, "e"] & \NP = G\times^P \mc{N}_P
\arrow[r, "\mu_P"] &\mc{N}_G
\end{tikzcd}.
\]
Here, $\mc{N}_L$ is the nilpotent cone associated to $L$, $\pi$ is the projection, $e$ is the inclusion, and $\mu_P$ is the action map. The parabolic induction functor $\inn^G_P: D^b_L(\mc{N}_L,\kp)\to D^b_G(\mc{N}_G,\kp)$ is defined by \[\inn^G_P(\mc{F}):= (\mu_P)_! (e^* \f^G_P)^{-1}\pi^* \mc{F},
\] where $(e^* \f^G_P)^{-1}$ is the inverse to the equivariant induction equivalence $D^b_P(\mc{N}_P,\kp)\stackrel{\sim}{\to} D^b_G(\NP,\kp)$.
Along with the study of the Springer sheaf and generalized Springer correspondence \cite{Lu3}, \cite{Sp} in characteristic $0$, this functor has played a major role in modular Springer theory \cite{AJHR1}, \cite{AJHR2}, \cite{AJHR3}.

Let $\SpL$ be the Springer sheaf with respect to the Levi $L$. In the following theorem, we relate the partial Springer sheaf with the Springer sheaf assuming that $k$ is a field such that the nilpotent cone $\mc{N}_G$ is $k$-smooth. This holds in particular when characteristic of $k$ is 0 by \cite{BM}.

\begin{theorem} Assume that $k$ is a field such that the nilpotent cone $\mc{N}_G$ is $k$-smooth. Then the partial Springer sheaf $\Spp$ is a subobject of $\inn^G_P \SpL\cong\Sp$.	\end{theorem}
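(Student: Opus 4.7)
The plan is to prove the isomorphism and the embedding separately.

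For the isomorphism $\inn^G_P \SpL \cong \Sp$, I would invoke transitivity of parabolic induction. Recall that $\SpL$ is itself a parabolic induction from the trivial Springer sheaf of the maximal torus: $\SpL \cong \inn^L_{B_L}(\underline{k}_{\{0\}})$ (with appropriate shifts), where $\underline{k}_{\{0\}}$ is the skyscraper on $\mc{N}_T = \{0\}$. Using the transitivity isomorphism $\inn^G_P \circ \inn^L_{B_L} \cong \inn^G_B$ then yields
\[
\inn^G_P(\SpL) \;\cong\; \inn^G_P \circ \inn^L_{B_L}(\underline{k}_{\{0\}}) \;\cong\; \inn^G_B(\underline{k}_{\{0\}}) \;\cong\; \Sp.
\]

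For the embedding $\Spp \hookrightarrow \inn^G_P \SpL$, I would appeal to Theorem \ref{th1.9}(2), which gives $\Spp \cong (\Sp)^{W_L}$. By the definition of $W_L$-invariants recalled earlier, $(\Sp)^{W_L}$ is the largest subobject of $\Sp$ on which $W_L$ acts trivially (equivalently, the intersection $\bigcap_{w \in W_L}\ker(w - \mathrm{id})$ in the abelian category of perverse sheaves), and hence tautologically a subobject of $\Sp$. Composing with the isomorphism of the first step produces the desired embedding $\Spp \hookrightarrow \inn^G_P \SpL$.

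The $k$-smoothness hypothesis on $\mc{N}_G$ serves to guarantee that $\Sp$ is semisimple and that the decomposition theorem applies to $\mu$ with $k$-coefficients, making the abstract identifications above rigorous. The main technical point, and the expected obstacle, will be to verify that the embedding $\Spp \hookrightarrow \Sp$ obtained via the abstract identification with $(\Sp)^{W_L}$ agrees with the geometric morphism induced by the factorization $\mu = \mu_P \circ \zeta$ in Diagram \eqref{eq:PGSdiag} (concretely, the one arising from the adjunction unit $\underline{k}_{\widetilde{\mc{N}}^P}\to \zeta_*\underline{k}_{\widetilde{\mc{N}}}$ after applying $(\mu_P)_*[\dim \mc{N}]$), so that the embedding into $\inn^G_P \SpL$ is realized by explicit sheaf-theoretic operations rather than abstract categorical nonsense.
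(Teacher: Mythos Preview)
Your argument is correct but takes a genuinely different route from the paper's.

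The paper proceeds by first exhibiting an injection $\underline{\kp}_{\mc{N}_L}[\dim \mc{N}_L]\hookrightarrow \SpL$: one constructs a nonzero map $j_!\underline{\kp}_{\mc{O}_{reg}}[\dim \mc{N}_L]\to\SpL$ by adjunction, truncates to obtain a nonzero map from $\ph^0(j_!\underline{\kp}_{\mc{O}_{reg}}[\dim \mc{N}_L])\cong\underline{\kp}_{\mc{N}_L}[\dim \mc{N}_L]$, and then invokes $\kp$-smoothness to identify this source with a simple $\IC$ sheaf, forcing injectivity. Applying the exact functor $\inn^G_P$ yields $\Spp\cong\inn^G_P(\underline{\kp}_{\mc{N}_L}[\dim\mc{N}_L])\hookrightarrow\inn^G_P\SpL$. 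The isomorphism $\inn^G_P\SpL\cong\Sp$ is then checked by a direct diagram chase rather than by appealing to transitivity. Your approach, by contrast, simply quotes Theorem~\ref{th1.9}(2) to get $\Spp\cong(\Sp)^{W_L}\subset\Sp$ and uses transitivity for the isomorphism. This is shorter and, notably, does \emph{not} use the $\kp$-smoothness hypothesis at all: Theorem~\ref{th1.9} is stated and proved without it. So your route in fact yields a slightly stronger statement. What the paper's argument buys is an explicit geometric embedding (the one induced from the inclusion of the constant sheaf into $\SpL$), whereas yours is a priori only abstract; your final paragraph correctly flags this distinction, though the theorem as stated does not require the two to coincide.

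One correction: your justification of the $\kp$-smoothness hypothesis is off. $\kp$-smoothness of $\mc{N}_G$ does \emph{not} make $\Sp$ semisimple (indeed $\Sp$ is typically not semisimple in positive characteristic), nor does it give a decomposition theorem for $\mu$. Its role in the paper's proof is solely to ensure that $\underline{\kp}_{\mc{N}}[\dim\mc{N}]\cong\IC(\mc{O}_{reg},\underline{\kp})$ is simple, so that the nonzero map constructed there is automatically injective. Since your argument never needs this step, you should drop that sentence rather than repair it.
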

\begin{proof} , where $\ph^0$ represents perverse cohomology in degree 0
Our first claim is that the shifted constant sheaf $\underline{\kp}_{\mc{N}_G}[\dim \mc{N}_G]$ appears as a direct summand of $\Sp$. From the following diagram 
\[\begin{tikzcd}
\mu^{-1}(\mc{O}_{reg})	\arrow[r, "\tilde{j}"] \arrow[d, "\cong"]& \N \arrow[d, "\mu"] \\ \mc{O}_{reg} \arrow[r, "j"'] & \mc{N}_G
\end{tikzcd}\]
we can see $j^*\Sp \cong \underline{\kp}_{\mc{O}_{reg}}[\dim \mc{N}_G]$. Therefore

\begin{align*}
	0\ne \kp\cong \Hom (\underline{\kp}_{\mc{O}_{reg}} [\dim \mc{N}_G], \underline{\kp}_{\mc{O}_{reg}} [\dim \mc{N}_G])& \cong  \Hom (\underline{\kp}_{\mc{O}_{reg}} [\dim \mc{N}_G], j^* \Sp)\\&\cong \Hom(j_! \underline{\kp}_{\mc{O}_{reg}} [\dim \mc{N}_G], \Sp).
\end{align*} Thus we have a nonzero map from $j_! \underline{\kp}_{\mc{O}_{reg}} [\dim \mc{N}_G] $ to $\Sp$. Now our aim is to show this map induces an injective map from $\ph^0(j_! \underline{\kp}_{\mc{O}_{reg}} [\dim \mc{N}_G])$ to $\Sp$.

Let $(^p\mc{D}^{\geq0}, {^p\mc{D}^{\leq 0}})$ be the perverse $t$-structure on $D^b_G(\mc{N},\kp)$.
The functor $j_!$ is right exact so \[j_! \underline{\kp}_{\mc{O}_{reg}} [\dim \mc{N}_G] \in \pd^{\le 0}.\]
Consider the truncated distinguished triangle
\[\pt^{\le -1}( j_! \underline{\kp}_{\mc{O}_{reg}} [\dim \mc{N}_G])\to j_! \underline{\kp}_{\mc{O}_{reg}} [\dim \mc{N}_G] \to \ph^0( j_! \underline{\kp}_{\mc{O}_{reg}} [\dim \mc{N}_G])\to .
\] Apply $\Hom(-,\Sp)$ to get an exact sequence 
\begin{align*}
&\to \Hom(\pt^{\le -1}( j_! \underline{\kp}_{\mc{O}_{reg}} [\dim \mc{N}_G])[1], \Sp) \to \Hom(\ph^0( j_! \underline{\kp}_{\mc{O}_{reg}} [\dim \mc{N}_G]), \Sp) 
\\ &\to \Hom(j_! \underline{\kp}_{\mc{O}_{reg}} [\dim \mc{N}_G], \Sp ) \to \Hom(\pt^{\le -1}( j_! \underline{\kp}_{\mc{O}_{reg}} [\dim \mc{N}_G]), \Sp)\to .
\end{align*}
Now $\Hom(\pt^{\le -1}( j_! \underline{\kp}_{\mc{O}_{reg}} [\dim \mc{N}_G]),\Sp) $ and $\Hom(\pt^{\le -1}( j_! \underline{\kp}_{\mc{O}_{reg}} [\dim \mc{N}_G])[1],\Sp) $ are $0$ by the definition of $t$-structure. Hence we get \[0\ne\Hom(j_! \underline{\kp}_{\mc{O}_{reg}} [\dim \mc{N}_G],\Sp )\cong \Hom(\ph^0( j_! \underline{\kp}_{\mc{O}_{reg}} [\dim \mc{N}_G]),\Sp).\] This means we have a nonzero map from $\ph^0(j_! \underline{\kp}_{\mc{O}_{reg}} [\dim \mc{N}_G]) $ to $\Sp$. 

By \cite[Prop. ~5.1]{AM}, $\ph^0(j_! \underline{\kp}_{\mc{O}_{reg}}[\dim \mc{N}_G] )\cong \underline{\kp}_{\mc{N}_G}[\dim \mc{N}_G]$. Since $\mc{N}_G$ is $\kp$-smooth, $\underline{\kp}_{\mc{N}_G} [\dim \mc{N}_G]$ is isomorphic to the intersection cohomology $\IC(\mc{O}_{reg}, \underline{\kp}_{\mc{O}_{reg}})$ which is a simple perverse sheaf. Hence we have an injective map $\underline{\kp}_{\mc{N}_G} [\dim \mc{N}_G]\rightarrow \Sp$, and so $\underline{\kp}_{\mc{N}_G} [\dim \mc{N}_G]$ is a subobject of $\Sp$. Since induction is exact, we get $\inn^G_P\underline{\kp}_{\mc{N}_L}[\dim \mc{N}_L]\cong \Spp$ is a subobject of $\inn^G_P \SpL$ as well.

Finally sheaf functor calculations with the following diagram imply the isomorphism $\inn^G_P\SpL\cong \Sp$.
\[\begin{tikzcd}
\mc{N}_L & \mc{N}_P \arrow[l, "\pi_P"] \arrow[r, "e_P"'] & G\times^P \mc{N}_P \arrow[r, "\mu_P"'] & \mc{N}_G \\
P\times^B \mf{u}/\mf{u}_P \arrow[u, "\mu_L"]& P\times^B \mf{u} \arrow[l, "\pi"] \arrow[r, "e"] 	\arrow[u] & G\times^B \mf{u} \arrow[r, "\mu"] \arrow[u] & \mc{N}_G \arrow[u, "="]
\end{tikzcd}
\]Here $P\times^B \mf{u}/\mf{u}_P\cong L\times^{(B\cap L)} \mf{u}_{B\cap L}\cong \tilde{\mc{N}}_L$. Therefore $\Spp$ is a subobject of $\Sp$.
\end{proof}

\section{Two actions of $W(L)$}\label{sec5}
Corollary \ref{cor1.12} gives an isomorphism $\End(\SpU)\cong \kp[W(L)]$. We call this map induced by Fourier transform
\[\rhoU: \kp[W(L)]\to \End(\SpU).\] 
Also from Lemma \ref{th1.13}, we get another action of $W(L)$ on $\SpU$ induced by restriction which we denote by
\[\phiU: \kp[W(L)]\to \End(\SpU).\] The aim of this section is to analyze the connection between $\rhoU$ and $\phiU$.
 
\subsection{Actions on $\h^*(G/P)$}

Let $r$ be the projection $G/L\to G/P$.  Since $r$ is a vector bundle with fiber $U_P$, $r^*$ is an isomorphism
\[r^*: \h^*(G/P)\to \h^*(G/L).\]
Now $W(L)=N_G(L)/L$ acts on $G/L$ and so also acts on $\h^*(G/L)$. Therefore $W(L)$ acts on $\h^*(G/P)$ by the above isomorphism. Suppose $G/P$ has real dimension $2N$.

\begin{theorem}\label{th5.1}
	As a $W(L)$-module, $\h^*(G/P)$ is faithful as long as $char(\kp)$ does not divide $|W|$.
\end{theorem}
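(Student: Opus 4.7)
The plan is to invoke Borel's coinvariant description of $\h^*(G/P;\kp)$ and show faithfulness already on the degree-$2$ piece. Set $S = \mathrm{Sym}(\mf{h}^*)$ graded so that $S_1 = \mf{h}^*$ corresponds to $\h^2$.

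First I would use $r^*$ to identify $\h^*(G/P;\kp) \cong \h^*(G/L;\kp)$ and then match the $W(L)$-action on this cohomology with the algebraic $W(L)$-action on Borel's presentation
\[
\h^*(G/P;\kp) \;\cong\; S^{W_L}\big/ (S^W_+)\cdot S^{W_L},
\]
valid since $\mathrm{char}(\kp)\nmid |W|$. Here $W(L) \subseteq N_W(W_L)$ normalizes $W_L$ and preserves $S^W$, so it acts naturally on the quotient. The needed compatibility traces back to the fact that right translation by $W = N_G(T)/T$ on $G/T$ corresponds to the natural $W$-action on the coinvariant algebra $S/(S^W_+)\cdot S$.

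For faithfulness, it suffices to consider the degree-$2$ piece. Under Borel's presentation this is
\[
\h^2(G/P;\kp) \;\cong\; (\mf{h}^*)^{W_L}/(\mf{h}^*)^W.
\]
Since $\mathrm{char}(\kp)\nmid |W|$, Maschke's theorem yields a $W$-stable decomposition $\mf{h}^* = (\mf{h}^*)^W \oplus V$, identifying the above quotient with $V^{W_L}$. Suppose $s \in W(L)$ fixes every vector of $V^{W_L}$. The reflections in $W$ fixing $V^{W_L}$ pointwise are precisely those along roots of $L$ (since a reflection $s_\alpha$ fixes $V^{W_L}$ iff $\alpha^\vee$ lies in the coroot span of $L$), and these generate $W_L$; by Steinberg's theorem on reflection-group stabilizers, the pointwise stabilizer in $W$ of a generic point of $V^{W_L}$ is therefore exactly $W_L$. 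Hence $s \in W_L \cap W(L) = \{e\}$, so $s = e$.

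The main obstacle is the verification of the compatibility between the geometric $W(L)$-action (via $r^*$ and right translation on $G/L$) and the algebraic $W(L)$-action on Borel's presentation; as an alternative that sidesteps this, one can instead compute purely algebraically, using the semidirect product $N_W(W_L) = W_L \rtimes W(L)$, that $(\kp[W])^{W_L}$ is a direct sum of $[W:N_W(W_L)]$ copies of the regular representation of $W(L)$, which is manifestly faithful. Once the compatibility is granted, the argument reduces to the classical Steinberg fact on pointwise stabilizers in reflection groups.
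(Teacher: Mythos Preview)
Your proposal is correct, and your brief ``alternative'' at the end is essentially the paper's own argument. The paper identifies $\h^*(G/P)\cong \h^*(G/B)^{W_L}$ (via $\h^*(G/T)^{W_L}\cong \h^*(G/L)$), regards $\h^*(G/B)$ as an $N_W(W_L)$-module through the semidirect product $N_W(W_L)=W_L\rtimes W(L)$, and then argues that unfaithfulness of the $W(L)$-action on the $W_L$-invariants would force unfaithfulness of the full $W$-action on $\h^*(G/B)$, contradicting \cite[Lemmas~4.1,~4.2]{AJHR1}. Implicitly this last step uses that, in the easy case, $\h^*(G/B)\cong\kp[W]$ as a $W$-module, so that $\h^*(G/B)^{W_L}\cong\kp[W_L\backslash W]$ carries a free $W(L)$-action; your observation that $(\kp[W])^{W_L}$ splits as $[W:N_W(W_L)]$ copies of the regular $W(L)$-representation makes this deduction explicit.

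Your primary route---Borel's coinvariant presentation together with Steinberg's stabilizer theorem applied in degree~$2$---is a genuinely different argument. It has the virtue of localizing faithfulness to the single piece $\h^2(G/P)\cong V^{W_L}$ and of identifying the relevant pointwise stabilizer concretely as $W_L$, but it imports more machinery (the Borel isomorphism for $G/P$, Steinberg's theorem, and the geometric--algebraic compatibility you flag). The paper's route is shorter and purely representation-theoretic once $\h^*(G/B)\cong\kp[W]$ is granted, avoiding the coinvariant algebra entirely by working directly with $\h^*(G/T)$ and its $W_L$-invariants. Both approaches ultimately rest on the same compatibility issue between the geometric $W(L)$-action (via $r^*$) and an algebraic one; the paper addresses it through $\h^*(G/L)\cong\h^*(G/T)^{W_L}$ rather than through Borel's presentation.
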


\begin{proof}
    Our first claim is that
\[\h^*(G/P)=\h^*(G/B)^{W_L}.\]
  The W action on the chohomology of G/B comes from the isomorphism $\h^*(G/B)\cong \h^*(G/T)$ and the $W(L)$-action on $\h^*(G/P)$ comes from the isomorphism $\h^*(G/P)\cong \h^*(G/L)$. Since $W_L$ permutes the roots associated to $L$ and fixes all others, we get $\h^*(G/L)\cong \h^*(G/T)^{W_L}$. This implies our above claim. Now consider $\h^*(G/B)$ as $N_W(W_L)$-representation. Since $N_W(W_L)=W(L)\ltimes W_L$, we have that $W(L)$ acts on $\h^*(G/B)^{W_L}$. If $\h^*(G/P)$ is an unfaithful $W(L)$-representation, then the $W$-action on $\h^*(G/B)$ must also be unfaithful. This is a contradiction by \cite[Lemma ~4.1,4.2]{AJHR1}. 
\end{proof}

Now we define two actions of $W(L)$ on $\h^*(G/P)$. The goal is to find a connection between these two actions. There is a natural action of $W(L)$ on $G/L$. Through the projection, $r: G/P\to G/L$ we can get an action of $W(L)$ on $\h^i(G/P)$. Now we will define another action of $W(L)$ on $\h^i(G/P)$. Recall $G/P$ is compact, therefore $\h^i_c(G/P)=\h^i(G/P)$. However the action of $W(L)$ on $\h^i_c(G/P)$ comes from the identification

\[\h^i_c(G/P)^*\cong \h^{2N-i}(G/P),\]

\noindent which we explain below in more detail.

There is an isomorphism 
\begin{equation}\label{eq5.7}
    \h^i_c(G/P)^*\cong \h^{BM}_i(G/P)
\end{equation} between the dual of compactly supported cohomology of $G/P$ and the Borel-Moore homology. When $k=\mathbb{Z}$, this holds because $\h^i(G/P)$ is a free abelian group for all $i$.
  We have the following $W(L)$-equivariant isomorphisms:
  \begin{align*}
  \h^i(G/P)^*\cong &	\h^{BM}_i(G/P) \text{, using $G/P$ is compact and (\ref{eq5.7}).}\\ \cong & \h^{BM}_{2N+i}(G/L)\\ & \cong  \h^{4N-(2N+i)}(G/L) 
 \\ & =\h^{2N-i}(G/L).\\
  \end{align*}
\noindent Here the second isomorphism uses again that $r$ is a vector bundle, and the third isomorphism follows from Poincare duality. 

  Recall that $\h^{2N-i}(G/L)\cong \h^{2N-i}(G/P)$, so from above the action of $W(L)$ on $\h^i(G/P)^*$ is the same as the action of $W(L)$ on $\h^{2N-i}(G/L)$.

\begin{theorem}\label{th3.1}
 With the two actions of $W(L)$ on $\h^i(G/P)$ and $\h^{2N-i}(G/P)$ defined above,
we	 have the following $W(L)$-equivariant isomorphism,
	\[\h^{2N-i}(G/P) \cong \h^i(G/P)^* \otimes \h^{2N}(G/P),\] where $\h^i(G/P)^*$ denotes the dual of $\h^i(G/P)$.
\end{theorem}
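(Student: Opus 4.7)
The plan is to deduce this from Poincaré duality on $G/P$. The main subtlety is that $W(L)$ does not act on $G/P$ itself, so the $W(L)$-equivariance of the cup product pairing must be verified by transporting the question to $G/L$, where the action is explicit.

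First I would note that $G/P$ is a smooth projective complex variety of complex dimension $N$, hence a compact oriented real manifold of dimension $2N$. By the Bruhat decomposition, $\h^*(G/P,\kp)$ is a free $\kp$-module concentrated in even degrees, with $\h^{2N}(G/P)\cong\kp$. Poincar\'e duality therefore provides a perfect cup product pairing $\cup:\h^i(G/P)\otimes\h^{2N-i}(G/P)\to\h^{2N}(G/P)$. Because the modules involved are free and finitely generated and the target has rank one, this pairing induces a canonical isomorphism
\[\h^{2N-i}(G/P)\xrightarrow{\sim}\Hom_\kp(\h^i(G/P),\h^{2N}(G/P))=\h^i(G/P)^*\otimes\h^{2N}(G/P);\]
the content of the theorem is then that this isomorphism is $W(L)$-equivariant for the first action on each factor.

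To verify equivariance, recall that the first action on $\h^*(G/P)$ was defined so that $r^*:\h^*(G/P)\xrightarrow{\sim}\h^*(G/L)$ is $W(L)$-equivariant, with $W(L)$ acting naturally on the right. Since $r^*$ is a ring homomorphism, it intertwines cup products on the two sides. On $G/L$ the $W(L)$-action is by biholomorphisms and therefore by ring automorphisms of $\h^*(G/L)$, so the cup product pairing on $\h^*(G/L)$ is manifestly $W(L)$-equivariant; transporting this equivariance back along $r^*$ shows that the cup product pairing on $\h^*(G/P)$ is $W(L)$-equivariant for the first action, which in turn forces the induced duality isomorphism above to be $W(L)$-equivariant. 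The only real obstacle here is this indirect verification of equivariance, which is handled once one notes that the first action preserves the ring structure on $\h^*(G/P)$ inherited from $r^*$.
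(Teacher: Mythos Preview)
Your proof is correct and follows essentially the same route as the paper: both arguments set up the Poincar\'e duality cup product on $G/P$, then transport the question of $W(L)$-equivariance to $G/L$ via the $W(L)$-equivariant ring isomorphism $r^*$, using that $W(L)$ acts on $G/L$ by automorphisms and hence respects the cup product there. Your version is slightly more explicit in noting that $\h^*(G/P,\kp)$ is free (via the Bruhat decomposition), which justifies passing from a perfect pairing to the tensor product form of the isomorphism over an arbitrary coefficient ring.
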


\begin{proof} The following diagram is $W(L)$-equivariant and commutative.
	\[\begin{tikzcd}
		\h^i(G/P)\times \h^{2N-i}(G/P) \arrow[d, "r^*"] \arrow[r, ] & \h^{2N}(G/P)\arrow[d, "r^*"]\\
		\h^i(G/L)\times \h^{2N-i}(G/L)  \arrow[r, ] & \h^{2N}(G/L)
	\end{tikzcd}\]	
As $G/P$ is compact, the upper  horizontal line is an isomorphism by Poincare duality. To prove this theorem we want this isomorphism to be $W(L)$-equivariant. The lower horizontal map is already $W(L)$-equivariant, hence   is a perfect pairing. Therefore we get that

\begin{equation}\label{eq3.2}
	\h^{2N-i}(G/L)\cong  \h^{i} (G/L)^* \otimes \h^{2N}(G/L)
\end{equation}

\noindent is a $W(L)$-equivariant isomorphism.

  From \eqref{eq3.2} and the above commutative diagram, we can see the two $W(L)$-actions defined on $\h^i(G/P)$ and $\h^{2N-i}(G/P)$ are connected in the following way, 
\begin{align*}
\h^{2N-i}(G/P) & \cong \Hom(\h^i(G/P), \h^{2N}(G/P))\\ & \cong \h^i(G/P)^*\otimes \h^{2N}(G/P) \text{ , as $W(L)$ representation. } 	
\end{align*}\end{proof}

\subsection{Comparison of $W(L)$-actions}

 We have already seen that we can define a $W(L)$-action on $\h^i(G/P)$ or on $\h^i_c(G/P)^*\cong \h^{2N-i}(G/P)$ from the $W(L)$-action on $G/L$ with the vector bundle map $r: G/L\to G/P$. We will call this action the classical action of $W(L)$. In this subsection, we define another action of $W(L)$ on $\h^i(G/P)$ (resp. on  $\h^i_c(G/P)$) coming from the $W(L)$-action on $\GrP$. We call this action the partial Grothendieck action. The goal of this subsection is to compare the classical action and the partial Grothendieck action.
 
For a variety $X$, let $a_X$ be the constant map $X\to \{pt\}$. The map $t:\GP\to G/P$ is a vector bundle with fiber $\mf{p}$, so we have the following isomorphisms
\begin{align}\label{4.4}
	\h^*({a_{\mf{g}}}_! \GrP)\xrightarrow{\cong} \h^*({a_{\GP}}_!\underline{\kp}[\dim \mf{g}])\xrightarrow{\cong} \h^{* +\dim \mf{g}-\dim \mf{p}}_c(G/P). 
\end{align}
\noindent This induces a map on the endomorphism rings
\[\gamma_c^P: \End(\GrP) \to \End( \h^{* +\dim \mf{g}-\dim \mf{p}}_c(G/P)).  
\]
We also have the following Cartesian square

\[\begin{tikzcd}
G/P \arrow[r, hook, "j"] \arrow[d, ] & \GP\arrow[d, "\pi_P"]\\ \{0\} \arrow[r, "j_0"] & \mf{g}
\end{tikzcd}.\]

\noindent Thus, we have an isomorphism 
\begin{equation}\label{4.5}
	\h^*(j_0^* \GrP)\xrightarrow{\cong}\h^{*+\dim \mf{g}}(G/P).
\end{equation}

\noindent Again, we get an induced a map on the endomorphism rings
\[\gamma^P: \End(\GrP) \to \End(\h^{*+\dim \mf{g}}(G/P)).\]
The proof of the following theorem is inspired by \cite[Proposition ~5.4]{Sh}.
\begin{theorem} 
	The classical action of $W(L)$ on $\h^i(G/P)$ is same as the action coming from partial Grothendieck.
\end{theorem}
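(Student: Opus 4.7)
The approach is to realize both actions as arising from a single geometric construction using the auxiliary variety $\widetilde{\mathfrak{l}}_G := G \times^L \mathfrak{l}$, which carries a natural $W(L)$-action and maps to both $\GP$ and $\mathfrak{g}$.

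First, I would establish the geometric setup. Define $\widetilde{\mathfrak{l}}_G = G \times^L \mathfrak{l}$ with the $W(L)$-action $n \cdot [g, y] = [gn^{-1}, \mathrm{Ad}(n)y]$ for $n \in N_G(L)$. The map $\alpha: \widetilde{\mathfrak{l}}_G \to \mathfrak{g}$, $[g, y] \mapsto \mathrm{Ad}(g)y$, has zero fiber $G/L$ (on which $W(L)$ acts by classical right translation) and a generic fiber over $\grs$ compatible with the Galois structure of $\GRSP \to \grs$. Using $\mathfrak{l} \hookrightarrow \mathfrak{p}$, I obtain a map $\beta: \widetilde{\mathfrak{l}}_G \to \GP$ satisfying $\alpha = \pi_P \circ \beta$; its zero-fiber restriction is the vector bundle $r: G/L \to G/P$, while its restriction over $\grs$ intertwines the $W(L)$-action on $\widetilde{\mathfrak{l}}_G|_{\grs}$ with the Galois action on $\GRSP$.

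Second, using this, I would compare the two actions. From the adjunction unit $\underline{\kp}_{\GP} \to \beta_*\underline{\kp}_{\widetilde{\mathfrak{l}}_G}$ I get a morphism $(\pi_P)_*\underline{\kp}_{\GP} \to \alpha_*\underline{\kp}_{\widetilde{\mathfrak{l}}_G}$ of complexes on $\mathfrak{g}$. Applying $j_0^*[\dim \mathfrak{g}]$ and taking hypercohomology yields $r^*: \h^\bullet(G/P) \to \h^\bullet(G/L)$, which is an isomorphism because $r$ is a vector bundle. By the compatibility from Step 1, this morphism is $W(L)$-equivariant: on the source, $W(L)$ acts via the partial Grothendieck action (Corollary~\ref{cor1.12}); on the target, it acts classically. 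Hence the two $W(L)$-actions on $\h^\bullet(G/P)$ are intertwined by the isomorphism $r^*$, proving the theorem.

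The main obstacle is establishing $W(L)$-equivariance of the morphism $(\pi_P)_*\underline{\kp}_{\GP} \to \alpha_*\underline{\kp}_{\widetilde{\mathfrak{l}}_G}$ at the level of complexes on $\mathfrak{g}$. This amounts to identifying the Galois deck-transformation action on $\GRSP$ (which gives the partial Grothendieck action via $\mc{IC}$-extension by Lemma \ref{lm1.6}) with the right-translation $W(L)$-action on $\widetilde{\mathfrak{l}}_G|_{\grs}$ under $\beta$. The verification is geometric and can be carried out fiber-by-fiber over a chosen regular semisimple element, in the spirit of \cite[Proposition 5.4]{Sh}; propagating the identification to the zero fiber and hence to $\h^\bullet(G/P)$ follows from proper base change applied to the Cartesian square defining $j_0^*$.
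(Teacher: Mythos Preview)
Your overall strategy---using an auxiliary $G\times^L(\cdot)$ space carrying a geometric $W(L)$-action and comparing it to $\GP$---matches the paper's, and your $\widetilde{\mathfrak{l}}_G=G\times^L\mathfrak{l}$ is the global version of the space $\G^L_{rs}$ used there. But there is a real gap in the passage from $\grs$ to the zero fiber. The map $\alpha:\widetilde{\mathfrak{l}}_G\to\mathfrak{g}$ is \emph{not proper}: its fiber over $0$ is $G/L$, which is not compact (e.g.\ $SL_2/T\cong\mathbb{P}^1\times\mathbb{P}^1\setminus\Delta$). Hence proper base change does not identify $j_0^*\alpha_*\underline{\kp}_{\widetilde{\mathfrak{l}}_G}$ with $R\Gamma(G/L,\underline{\kp})$, and the restriction of your morphism $(\pi_P)_*\underline{\kp}_{\GP}\to\alpha_*\underline{\kp}_{\widetilde{\mathfrak{l}}_G}$ to $0$ does not obviously yield $r^*$. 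More fundamentally, even granting $W(L)$-equivariance of this morphism over $\grs$, it does not automatically propagate to the stalk at $0$: the target $\alpha_*\underline{\kp}_{\widetilde{\mathfrak{l}}_G}$ is not an IC sheaf (and need not even be perverse), so equality of two maps out of $\GrP$ into it cannot be checked on the open dense set alone.

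The paper avoids pushing forward along $\alpha$ entirely. It fixes a regular semisimple $G$-orbit $\mathcal{O}\subset\grs$ and studies the pullback $p^*:\h^i(G/P)\to\h^i(\pi_P^{-1}(\mathcal{O}))$ along the projection $p:\pi_P^{-1}(\mathcal{O})\to G/P$. Factoring $p$ through the inclusion $\pi_P^{-1}(\mathcal{O})\hookrightarrow\GP$ and the bundle projection $\GP\to G/P$ (whose pullback is inverse to restriction along $G/P\cong\pi_P^{-1}(0)\hookrightarrow\GP$) shows $p^*$ is $W(L)$-equivariant for the partial Grothendieck action. Your isomorphism $\beta|_{\grs}:\widetilde{\mathfrak{l}}_G|_{\grs}\xrightarrow{\sim}\GRSP$, restricted to $\mathcal{O}$, together with the first projection to $G/L$, then shows $p^*$ is also equivariant for the classical action. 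The two actions on $\h^i(G/P)$ agree because $p^*$ is \emph{injective}---proved by decomposing $\pi_P^{-1}(\mathcal{O})$ into $|W(L)|$ components, each mapping isomorphically to $G/L$ under the first projection. This injectivity argument is precisely what replaces the failed base-change step in your outline.
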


\begin{proof}
	Recall that the classical action of $W(L)$ comes from the action on $G/L$ defined by
	\[w\cdot (gL)=gwL.\] The classical action of $W(L)$ on $\h^i(G/P)$ gets induced by classical action of $W(L)$ on $G/L$ and the vector bundle map, $r:G/L\to G/P$.
	For any $x\in \mf{g}$, let $\mc{P}_x$ be the inverse image of $x$ under the map $r$. Hence $\h^i(\GrP)_x$ is canonically isomorphic to $\h^{i+\dim \mf{g}}(\mc{P}_x,\kp)$ , and the partial Grothedieck action of $W(L)$ on $\h^i(\GrP)_x$ induces a corresponding action on $\h^{i+\dim \mf{g}}(\mc{P}_x,\kp)$. Therefore for $x=0$, we get the partial Grothedieck action on $\h^{i+\dim \mf{g}}(G/P)$.
	
	Now we assume $\zeta \in \lie(T)$ is a semisimple element, i.e $Z_G(\zeta)=T$. Let $\mc{O}$ be a $G$-orbit of $\zeta$ in $\mf{g}$.
	Let $p$ denote the projection $\pi_P^{-1}(\mc{O})\to G/P$. The idea of the proof is to show the map $p^*$ is $W(L)$-equivariant with respect to both the classical action and the partial Grothendieck action, and that it is an injective map. 
 
  Our first claim is that the induced map
	 \[p^*: \h^i(G/P)\to \h^i(\pi_P^{-1}(\mc{O}))\] is $W(L)$-equivariant with respect to the partial Grothendieck action of $W(L)$. To show this, we 
	 consider the maps
	 \[i': \pi_P^{-1}(\mc{O})\hookrightarrow \GP \hspace{.3cm}\textup{ and }\hspace{.3cm} p': \GP\to G/P.\]
	 Then $p=p'\circ i'$. Note that if $X\subset Y$ are subvarieties of $\mf{g}$, then the map $\h^i(\pi_P^{-1}(Y))\to \h^i(\pi_P^{-1}(X))$ is $W(L)$-equivariant with respect to the partial Grothendieck action. Thus, the pull-back $(i')^*$  is $W(L)$-equivariant. So it is enough to show $(p')^*$ is $W(L)$-equivariant. Now consider the map 
	 \[q: G/P{\cong} \pi_P^{-1}(0)\xhookrightarrow{} \GP.\] We know $q^*$ is $W(L)$-equivariant with respect to the partial Grothendieck action. Also $p'\circ q= Id_{G/P}$ and hence $p'^*=(q^*)^{-1}$. Therefore $(p')^*$ is $W(L)$-equivariant with respect to the partial Grothendieck action, and so $p^*$ is as well.  
	
 Consider the following set
	 \[\G^L_{rs}=\{(gL,x)\in G/L\times \mf{g}| \mathrm{Ad}(g^{-1})x\in \mf{l}_{rs}\}\]
	 with $L_{rs}=L\cap G_{rs}$, $G_{rs}$ being the collection of all regular semi-simple element in $G$ and $\mf{l}_{rs}=\lie(L_{rs})$. The group $W(L)$ acts on $\G^L_{rs}$ by 
	 \[w(gL,x)=(gwL,x).\] We define a map $\phi$ from $\G^L_{rs}$ to ${\pi_P}^{-1}(\grs)$ by
	 \[(gL,x)\to (gP,x).\] This map is an isomorphism from \cite[~ 2.4(iii)]{Sh}. 
	 With respect to this action and the trivial action on $\mf{g}_{rs}$, the map $\pi^P_{rs}: \G^L_{rs}\to \mf{g}_{rs}$ which is the restriction of $\pi_P$ on $\pi_P^{-1}(\grs)\cong \G^L_{rs}$, is $W(L)$ equivariant. Note, $\mc{O}$ is contained in $\mf{g}_{rs}$. 
	  Let $\tilde{\mc{O}}$ be the subvariety of $ \G^L_{rs}$ corresponding to   ${\pi_P}^{-1}(\mc{O})$. Then,
	 \[\tilde{\mc{O}}=\{(gwL, Ad(g)\zeta)\in G/L\times \mf{g}| g\in G, w\in W(L)\}.\]
	 Cleary we have an action of $W(L)$ on $\tilde{\mc{O}}$ given by
	 \[v(gwL,Ad(g)\zeta)=(gwvL, Ad(g)\zeta).\] 
	 
	 This is the restriction of the action of $W(L)$ on $\G^L_{rs}$. Thus the induced action of $W(L)$ on $\h^i(\pi_P^{-1}(\mc{O}))\cong \h^i(\tilde{\mc{O}})$ is the same as the partial Grothendieck action of $W(L)$ on it.
	 
	 Now let $\tilde{p}: \tilde{\mc{O}}\to G/L$ be the projection on the first component. Then $\tilde{p}$ is $W(L)$-equivariant with respect to the classical action of $W(L)$ on $G/L$ considered before. We have the following commutative diagram
	 
	 \[\begin{tikzcd}
	 \tilde{\mc{O}} \arrow[d, "\tilde{p}"] \arrow[r,"\cong" ] & \pi_P^{-1}(\mc{O}) \arrow[d, "p"]
	 \\
	 G/L \arrow[r] & G/P	
	 \end{tikzcd}
\] where $G/L\to G/P$ is the natural projection. Since the classical action of $W(L)$ comes from the $W(L)$-action on $G/L$, we see
\[p^*: \h^i(G/P)\to \h^i(\pi_P^{-1}(\mc{O}))\]is  $W(L)$-equivariant with respect to the classical action of $W(L)$ on $\h^i(G/P)$ also. Hence to complete the proof it is enough to show $p^*$ is injective, or equivalently $\tilde{p}^*$ is injective. Now for $w\in W(L)$, we define
\[\tilde{\mc{O}}_w=\{(gwL, Ad(g)\zeta)\in G/L\times \mf{g}| g\in G\}.\] Then we have
\[\tilde{\mc{O}}=\sqcup_{w\in W(L)} \tilde{\mc{O}}_w,\] and each $\tilde{\mc{O}}_w$ is an irreducible component of $\tilde{\mc{O}}$. The projection map
\[\tilde{p}_w: \tilde{\mc{O}}_w\to G/L\] is an isomorphism. Also we have
\[{\tilde{p}_w}^*={i_w}^*\circ \tilde{p}^*\] with $i_w: \tilde{\mc{O}}_w\hookrightarrow \tilde{\mc{O}}$. This forces $p^*$ to be injective.
\end{proof}

\begin{cor}\label{cor5.4}\begin{enumerate}
	\item The $W(L)$-action on $\End(\GrP)$ induced by $\gamma^P$ coincides with the $W(L)$-action induced from the classical action of $W(L)$ on $\h^{* +\dim \mf{g}-\dim \mf{p}}_c(G/P) $.
	
	\item The $W(L)$-action on $\End(\GrP)$ induced by $\gamma_c^P$ coincides with the $W(L)$-action induced from the classical action of $W(L)$ on $\h^{* +\dim \mf{g}}_c(G/P) $.
\end{enumerate}
	
\end{cor}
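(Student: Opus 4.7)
The plan is to reduce Corollary~\ref{cor5.4} to a single statement: both $\gamma^P$ and $\gamma_c^P$ pull back the classical $W(L)$-action on their target cohomology to the natural $W(L)$-action on $\End(\GrP)\cong \kp[W(L)]$ supplied by Theorem~\ref{th2.8}. Once this is in hand, both parts follow, since each asserts the coincidence of two $W(L)$-actions on $\End(\GrP)$---the one induced by $\gamma^P$ and the one induced from $\gamma_c^P$ (or equivalently, from the classical action on a cohomology of $G/P$ with a different degree shift, relying on compactness of $G/P$ to identify $\h^*_c(G/P)=\h^*(G/P)$)---and both will have been identified with the natural action.

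The ``$\gamma^P$-side'' of this identification is exactly the content of the preceding theorem: $\gamma^P$ intertwines the natural action on $\End(\GrP)$ with the classical action on $\h^{*+\dim\mf{g}}(G/P)$. What remains is the analogous statement for $\gamma_c^P$, namely that the natural action on $\End(\GrP)$ is carried by $\gamma_c^P$ to the classical action on $\h^{*+\dim\mf{g}-\dim\mf{p}}_c(G/P)$. My approach is to mirror the geometric argument of the preceding theorem verbatim. The key steps carry over: (a) restrict to the preimage $\pi_P^{-1}(\mc{O})$ of a regular semisimple orbit $\mc{O}=G\cdot\zeta$ with $Z_G(\zeta)=T$; (b) use the $W(L)$-equivariant isomorphism $\pi_P^{-1}(\mc{O})\cong\tilde{\mc{O}}\subset \G^L_{rs}$; (c) factor through the $W(L)$-equivariant projection $\tilde{p}\colon\tilde{\mc{O}}\to G/L$; (d) invoke injectivity of $\tilde{p}^*$, coming from the decomposition $\tilde{\mc{O}}=\sqcup_{w\in W(L)}\tilde{\mc{O}}_w$ into irreducible components, to transfer $W(L)$-equivariance from the open stratum to all of $G/P$.

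The main obstacle is verifying the $W(L)$-equivariance of the identification (\ref{4.4}), that is, that the isomorphism $\h^*(a_{\mf{g}!}\GrP)\cong \h^{*+\dim\mf{g}-\dim\mf{p}}_c(G/P)$ is compatible with the natural action induced by the endomorphisms of $\GrP$ on the source and with the classical $W(L)$-action on the target. This should follow from functoriality of $a_{\mf{g}!}$ applied to the $W(L)$-symmetries of $\GrP$, together with the $P$-equivariance of the vector bundle $t\colon\GP\to G/P$ (which relates fibers over $W(L)$-translates on $G/L$). Once this equivariance is verified, the injectivity-based argument from the preceding proof carries over to establish the required statement for $\gamma_c^P$, and hence both parts of the corollary follow from Theorem~\ref{th2.8} combined with compactness of $G/P$.
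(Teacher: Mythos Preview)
The paper states Corollary~\ref{cor5.4} without proof, so there is no explicit argument to compare against; the implied content is that part~(1) is a restatement of the preceding theorem (which identifies the classical and ``partial Grothendieck'' actions via the stalk at $0$, i.e., via $\gamma^P$), while part~(2) is meant to follow by an analogous consideration. You correctly recognize this structure, but your plan for part~(2) has a genuine gap.

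The Shoji-type argument you propose to ``mirror verbatim''---based on injectivity of $p^*:\h^i(G/P)\to\h^i(\pi_P^{-1}(\mc O))$---is intrinsically a $*$-pullback argument. Its equivariance input is the observation that for $X\subset Y\subset\mf g$ the restriction $\h^i(\pi_P^{-1}(Y))\to\h^i(\pi_P^{-1}(X))$ intertwines the Grothendieck action; this tests the action coming from $j_0^*\GrP$, i.e., $\gamma^P$. By contrast, $\gamma_c^P$ is built from $a_{\mf g!}\GrP$, and the compatibilities one needs are with $!$-extensions from open subsets (for example $\h^*_c(\grs,\GrP)\to\h^*_c(\mf g,\GrP)$, or equivalently $\h^*_c(\GRSP)\to\h^*_c(\GP)$), not with $*$-pullbacks. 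Simply re-running steps (a)--(d) with $p^*$ and $\tilde p^*$ says nothing about $\gamma_c^P$. Relatedly, the compactness of $G/P$ only identifies the underlying vector spaces $\h^*(G/P)=\h^*_c(G/P)$; it does \emph{not} force the $\gamma^P$- and $\gamma_c^P$-actions to agree (indeed, the main theorem of the paper shows they differ by $\Lambda_U$).

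There is also a logical slip at the end: you write that once the $W(L)$-equivariance of \eqref{4.4} is verified, ``the injectivity-based argument from the preceding proof carries over.'' But equivariance of \eqref{4.4} (Grothendieck action on the source, classical action on the target) \emph{is} exactly the assertion $\gamma_c^P\circ r^P=$ classical action; nothing remains to prove once you have it. The injectivity argument in the theorem is the \emph{mechanism} for establishing such an equivariance (for \eqref{4.5}), not a subsequent step. To salvage the approach you should either (i) run the genuine $!$-analogue---use the open inclusion $\grs\hookrightarrow\mf g$ to get a $W(L)$-equivariant map $\h^*_c(\GRSP)\to\h^*_c(\GP)$, identify the left-hand action as the geometric/Galois one via $\GRSP\cong\G^L_{rs}$, and show this controls the target---or (ii) invoke Verdier duality: $\GrP$ is self-dual, so $a_{\mf g!}\GrP\cong\mathbb D(a_{\mf g*}\GrP)$, and the $W(L)$-action on $a_{\mf g*}\GrP\cong\h^*(\GP)\cong\h^*(G/P)$ is handled by the preceding theorem.
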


\subsection{The comparison of the Springer and the partial Springer. }
Recall the Weyl group acts on $\Sp$ in two different ways. One action comes from the restriction map $i_{\mc{N}}: \mc{N}\xhookrightarrow{} \mf{g}$ and corresponds to the map
    \[\rho: \kp[W]\to \End(\Sp).\] Another is a consequence of the Fourier transform and corresponds to the map
    \[\phi: \kp[W]\to \End(\Sp).\] It has been proved in \cite{AJHR1} that these two maps are connected by
    \[\mathrm{sgn}: \kp[W]\to \kp[W],\] which is the sign change map. More precisely,
    \[\rho=\phi\circ \mathrm{sgn}.\]

The approach to prove this Theorem in \cite{AJHR1} is to first prove what they call ``the easy case": $\kp$ either embeds in a $\mathbb{Q}$-algebra or in an $\mathbb{F}_p$-algebra where $p$ does not divide $|W|$. In this case, their proof uses arguments already established. That is, they study the classical $W$-action on $\h^*(G/B, \kp)$ which is faithful as a $\kp[W]$-module. Since $\kp=\mathbb{Z}$ is included in this situation, \cite{AJHR1} use it to deduce the general case by extending scalars. Furthermore, to avoid a great deal of compatibility-type bookkeeping, they introduce $\Sp^{\kp}$ as a functor from the category of $\kp$-modules to the category of perverse sheaves on $\mc{N}$ and study the ring of natural endotransformations of this functor. 

It has been proved \cite[Lemma ~3.2]{AJHR1} that evaluation
\[\End(\Sp^{\kp})\to \End(\Sp(\kp))\] induces a $\kp$-algebra isomorphism. Therefore proving $\rho=\phi \circ \sgn$ can be reduced to proving
\begin{equation}\label{eq5.13}
    \rho^{\kp}=\phi^{\kp}\circ \sgn,
\end{equation} where $\phi^{\kp},\rho^{\kp}: \kp[W]\to \End(\Sp^{\kp})$ are defined similarly to $\phi$ and $\rho$ but as actions on functors. Now by \cite[Prop. ~3.6]{AJHR1}, for any $\kp$-algebra $\kp'$, there is a canonical homomorphism
\[\beta^{\kp}_{\kp'}:\End(\Sp^{\kp})\to \End(\Sp^{\kp'})\] such that the two diagrams

\begin{figure}[!htb]
    \centering
    \begin{minipage}{0.45\textwidth}
        \centering
        \begin{tikzcd}
	\kp[W] \arrow[r, "\rho^{\kp}"] \arrow[d]& \End(\Sp^{\kp}\arrow[d, "\beta^{\kp}_{\kp'}"])\\
	\kp'[W] \arrow[r,"\rho^{\kp'}"] & \End(\Sp^{\kp'})
\end{tikzcd} 
    \end{minipage}\hfill
    \begin{minipage}{0.45\textwidth}
        \centering
        \begin{tikzcd}
	\kp[W] \arrow[r, "\phi^{\kp}"] \arrow[d]& \End(\Sp^{\kp}\arrow[d, "\beta^{\kp}_{\kp'}"])\\
	\kp'[W] \arrow[r,"\phi^{\kp'}"] & \End(\Sp^{\kp'})
 \end{tikzcd}
    \end{minipage}
\end{figure}
\noindent are commutative. Since we may consider any $\kp$ as a $\mb{Z}$-algebra, this allows definition of the desired morphism

\begin{equation}\label{eq5.10}
    \End(\Sp^{\mb{Z}})\to \End(\Sp^{\kp}).
\end{equation}

\noindent Thus \eqref{eq5.13} is implied by the special case
\begin{equation}\label{eqzcase}\rho^{\mb{Z}}=\phi^{\mb{Z}}\circ \sgn.\end{equation}
    
Now we outline how to prove \eqref{eqzcase}. Assume that the characteristic of $\kp$ does not divide $|W|$ (so this includes the case $\kp=\mathbb{Z}$). With this assumption, $\h^{top}(G/B)$ is the sign representation of $W$, and $\h^*(G/B)$ is  faithful as a $W$-representation \cite[Lemma ~4.3]{AJHR1}. We also get the following commutative diagrams
          
    \begin{equation}
        \begin{tikzcd}\label{eqrhosigma}
        &\End(\Sp) \arrow[rd, "\sigma"]&\\ \kp[W] \arrow[r, "r"] & \End(\Gr) \arrow[u, "\tilde{\rho}"] \arrow[r, "\gamma"]  &  \End(\h^*(G/B))
    \end{tikzcd}
    \end{equation}
        \begin{equation}
        \begin{tikzcd}\label{eqphisigma}
        &\End(\Sp) \arrow[rd, "\sigma"]&\\ \kp[W] \arrow[r, "r"] & \End(\Gr) \arrow[u, "\tilde{\phi}"] \arrow[r,"\gamma_c"]  &  \End(\h^*(G/B))
    \end{tikzcd}. 
    \end{equation}
    Here the map $r$ is coming the isomorphism defined in (\ref{eq:endGroth}). Thus, we get $\sigma \circ \rho= \gamma \circ r$ and $\sigma\circ \phi= \gamma_c \circ r$. As classical $W$-representations, we get the isomorphism
    \[\h^*_c(G/B)\cong \h^*(G/B ) \otimes \h^{top}(G/B). \]
    Therefore we also get
    \[ \gamma \circ r=\gamma_c \circ r \circ \mathrm{sgn}.\]
Because $\h^*(G/B)$ is faithful as a $W$-representation, the maps $\sigma \circ \tilde{\phi}\circ r $ and $\sigma\circ \tilde{\rho} \circ r \circ \mathrm{sgn}$ are injective. Since $\tilde{\rho} \circ r $ is an
  isomorphism, $\sigma $ must be injective. This allows us to say $\tilde{\rho} \circ r = \tilde{\phi} \circ r \circ \mathrm{sgn}$ which is same as $\rho =\phi \circ \mathrm{sgn}$. 
  
By \cite[Theorem~3.7]{AJHR1}, the case $\kp=\mb{Z}$ is enough to prove the result in all characteristic. This same reasoning can be used in our situation to reduce the question to $\kp=\mb{Z}$.  In our case, we get a connection between two actions of $W(L)$ on $\SpU$. 
Let us denote the isomorphism induced from Theorem \ref{th1.10} by
\[\widetilde{\phi}^P: \End(\GrP) \to \End(\SpU),\]
and the map induced from Lemma \ref{th1.13} by
\[\widetilde{\rho}^P: \End(\GrP)\to \End(\SpU).\] Consider the Cartesian diagram
\[\begin{tikzcd}
G/P \arrow[r, "i_{G/P}"] \arrow[d] & G\times^P \mf{u}_P \arrow[r, "i_{\NP}"] \arrow[d, "\mu^{\mf{u}_P}"] & \tilde{\mf{g}}^P \arrow[d,"\pi^P"] \\ \{0\} \arrow[r, "i_0"] & \bar{\mc{O}} \arrow[r,"i_{\mc{O}}"] & \mf{g}
\end{tikzcd}.\]
By the base-change theorem we get
\[ \h^{*-2N}(i_0^* \SpU)\xrightarrow{\cong} \h^*(G/P,\kp).\] This induces a map between the endomorphism rings
\[\sigma^P: \End(\SpU)\to \End(\h^*(G/P,\kp)) \cong \End(\h^*_c(G/P,\kp)) \text{ by Theorem \ref{th3.1}}. \]

The following lemma is our analogue of \cite[Lemma ~4.7]{AJHR1}.

 \begin{lemma}
     We have $\gamma^P= \sigma^P\circ \widetilde{\rho}^P $ and $\gamma^P_c =\sigma^P\circ \widetilde{\phi}^P$.
 \end{lemma}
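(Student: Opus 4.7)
The plan is to prove both equalities by tracing the defining isomorphisms through the Cartesian rectangle displayed above the lemma, in close analogy with \cite[Lemma~4.7]{AJHR1}. The first part is a direct consequence of naturality of proper base change, while the second invokes the Fourier--Sato relation between stalk at the origin and compactly supported global cohomology.

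For $\gamma^P=\sigma^P\circ\widetilde{\rho}^P$: the outer rectangle
\[
\begin{tikzcd}
G/P \arrow[r] \arrow[d] & \GP \arrow[d,"\pi_P"] \\
\{0\} \arrow[r,"j_0"] & \mf{g}
\end{tikzcd}
\]
is Cartesian with $j_0=i_{\mc{O}}\circ i_0$. For $f\in\End(\GrP)$, the endomorphism $\widetilde{\rho}^P(f)$ of $\SpU\cong i_{\mc{O}}^*\GrP$ (Lemma~\ref{th1.13}(2)) is obtained by pulling back $f$ along $i_{\mc{O}}$; then $\sigma^P$ pulls back once more along $i_0$. Since $i_0^*\circ i_{\mc{O}}^* = j_0^*$, and since the identifications of $j_0^*\GrP$ and $i_0^*i_{\mc{O}}^*\GrP$ with a shift of $\underline{\kp}_{G/P}$ both come from decomposing the same outer Cartesian square, naturality of proper base change gives $\sigma^P(\widetilde{\rho}^P(f))=\gamma^P(f)$, which is precisely the equality on endomorphism rings.

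For $\gamma^P_c=\sigma^P\circ\widetilde{\phi}^P$, the key input is the standard Fourier--Sato property identifying $i_0^*T_{\mf{g}}\mc{F}$ with $(a_{\mf{g}})_!\mc{F}$ (up to a dimension shift) for conic $\mc{F}$ on $\mf{g}$, cf.\ \cite[Ch.~6.8]{Ac}. Applied to $f\in\End(\GrP)$, this identifies $i_0^*T_{\mf{g}}(f)$ with the action of $(a_{\mf{g}})_!(f)$ on $(a_{\mf{g}})_!\GrP$, which via~(\ref{4.4}) is precisely $\gamma^P_c(f)$ on $\h^{*+\dim\mf{g}-\dim\mf{p}}_c(G/P,\kp)$. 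On the other hand, Theorem~\ref{th1.10} rewrites $T_{\mf{g}}\GrP$ in terms of $(i_{\mc{N}})_*\SpU$; passing $i_0^*$ through $(i_{\mc{N}})_*$ (which collapses to pullback to the origin inside $\mc{N}$) identifies the same $i_0^*T_{\mf{g}}(f)$ with $\sigma^P(\widetilde{\phi}^P(f))$ on $\h^*(G/P,\kp)$. Using that $G/P$ is compact, so that $\h^*_c(G/P,\kp)\cong\h^*(G/P,\kp)$ as $W(L)$-modules (Theorem~\ref{th3.1}), the two endomorphisms coincide.

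The principal technical hurdle is not the overall structure but the bookkeeping of cohomological shifts: the Fourier stalk-at-origin identity contributes a shift by $\dim\mf{g}$, Theorem~\ref{th1.10} introduces the discrepancy $\dim\GP-\dim G\times^P\mf{u}_P$ between $T_{\mf{g}}\GrP$ and $(i_{\mc{N}})_*\SpU$, and the isomorphisms~(\ref{4.4}) and~(\ref{4.5}) carry their own shifts. One verifies that these cancel consistently, paralleling the bookkeeping in \cite[Lemma~4.7]{AJHR1} transported to the partial setting.
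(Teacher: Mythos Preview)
Your proposal is correct and follows exactly the route the paper indicates: the paper's proof consists solely of the sentence ``The proof follows in the same manner as Lemma~4.7 from \cite{AJHR1},'' and your argument is precisely the transport of that lemma to the partial setting, using naturality of base change for the first equality and the Fourier--Sato identity $j_0^*T_{\mf{g}}\cong (a_{\mf{g}})_!$ for the second. The only minor comment is that invoking Theorem~\ref{th3.1} for $\h^*_c(G/P)\cong\h^*(G/P)$ is overkill---compactness of $G/P$ already gives this identically, and Theorem~\ref{th3.1} is about the $W(L)$-action twist, not needed here.
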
 
 The proof follows in the same manner as Lemma 4.7 from \cite{AJHR1}. Once we have this lemma, we get commutative diagrams similar to \eqref{eqrhosigma} and \eqref{eqphisigma}:

 \begin{equation}\label{5.13a}
        \begin{tikzcd}
        &\End(\SpU) \arrow[rd, "\sigma^P"]&\\ \kp[W(L)] \arrow[r, "r^P"] & \End(\GrP) \arrow[u, "\widetilde{\rho^P}"] \arrow[r, "\gamma^P"]  &  \End(\h^*(G/P))
    \end{tikzcd}.
    \end{equation} 

    \begin{equation}\label{5.13b}
        \begin{tikzcd}
        &\End(\SpU) \arrow[rd, "\sigma^P"]&\\ \kp[W(L)] \arrow[r, "r^P"] & \End(\GrP) \arrow[u, "\widetilde{\phi^P}"] \arrow[r, "\gamma^P_c"]  &  \End(\h^*(G/P))
    \end{tikzcd}.
    \end{equation} 
\begin{lemma}\label{lm5.6}
 If $\kp$ embeds either in a $\mb{Q}$-algebra or in an $\mb{F}_p$-algebra where $p$ does not divide $|W|$, then $\sigma^P$ is injective.
\end{lemma}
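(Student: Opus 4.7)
The plan is to mimic, in the partial setting, the strategy used in \cite{AJHR1} to show that the analogous map $\sigma$ is injective in the easy case of Springer theory. The combinatorial input will be Theorem \ref{th5.1} (faithfulness of the classical $W(L)$-action on $\h^*(G/P)$), and the geometric input will be the fact that $\widetilde{\phi}^P$ is an isomorphism because it is induced by the Fourier--Sato transform.

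First I would observe that $\widetilde{\phi}^P \circ r^P\colon \kp[W(L)] \to \End(\SpU)$ is an isomorphism of $\kp$-algebras: $r^P$ is an isomorphism by Theorem \ref{th2.8}, and $\widetilde{\phi}^P$ is induced from the isomorphism of Theorem \ref{th1.10} via the auto-equivalence $T_{\mf{g}}$, so it is itself an isomorphism (compare Corollary \ref{cor1.12}). Next, I would appeal to Corollary \ref{cor5.4}(2) to identify the composition $\gamma^P_c \circ r^P\colon \kp[W(L)] \to \End(\h^*(G/P))$ with the map encoding the classical $W(L)$-action on $\h^*(G/P)$, up to the shift and the identification $\h^*(G/P) \cong \h^*_c(G/P)$ coming from Theorem \ref{th3.1}. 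Under the hypothesis of Lemma \ref{lm5.6}, $|W|$ becomes invertible in some $\kp$-algebra containing $\kp$, so Theorem \ref{th5.1} gives faithfulness of this classical representation; I would then conclude that $\gamma^P_c \circ r^P$ is injective.

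The injectivity of $\sigma^P$ would then follow formally from the commutative diagram \eqref{5.13b}: since
\[
\sigma^P \circ (\widetilde{\phi}^P \circ r^P) = \gamma^P_c \circ r^P
\]
is injective and $\widetilde{\phi}^P \circ r^P$ is an isomorphism, $\sigma^P$ must itself be injective.

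The step I expect to require the most care is making the embedding hypothesis on $\kp$ play cleanly with Theorem \ref{th5.1}, which is stated for rings whose characteristic does not divide $|W|$. The idea is that $\h^*(G/P,\mathbb{Z})$ is free over $\mathbb{Z}$, so $\h^*(G/P,\kp) \cong \h^*(G/P,\mathbb{Z}) \otimes_{\mathbb{Z}} \kp$, and faithfulness over a ring $\kp' \supseteq \kp$ in which $|W|$ is invertible can be descended to faithfulness over $\kp$ via the inclusion $\kp[W(L)] \hookrightarrow \kp'[W(L)]$. This is the same scalar-extension bookkeeping performed in \cite[Section~3]{AJHR1}, and is really the only non-formal ingredient in the proof.
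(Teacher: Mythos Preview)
Your proposal is correct and follows essentially the same route as the paper: use the commutative diagram \eqref{5.13b} together with the fact that $\widetilde{\phi}^P\circ r^P$ is an isomorphism (Corollary \ref{cor1.12}) and that $\gamma^P_c\circ r^P$ is injective by faithfulness of the classical $W(L)$-action (Theorem \ref{th5.1}). Your additional remark about descending faithfulness along $\kp \hookrightarrow \kp'$ via freeness of $\h^*(G/P,\mathbb{Z})$ is a point the paper leaves implicit, so your version is, if anything, slightly more careful here.
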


\begin{proof}
From Theorem \ref{th5.1}, we know the maps
\[\gamma^P\circ r^P \text{ and } \gamma_c^P \circ r^P: \kp[W(L)]\to \End(\SpU)\] are injective. By the above commutative diagram, these maps are the same as $ \sigma^P \circ \widetilde{\rho}^P \circ r^P $ and $\sigma^P \circ \widetilde{\phi }^P \circ r^P$. As we already know $\widetilde{\phi}^P\circ r^P$ is an isomorphism by Corollary \ref{cor1.12}, therefore $\sigma^P$ is injective.
\end{proof}

In the Springer sheaf case, the map $\sgn$ established the connection between $\rho$ and $\phi$. 
 For the partial Springer sheaf, it is well-known that the top cohomology of $G/P$ is an one dimensional representation of $W(L)$. Therefore it determines a character $\epsilon_U: W(L)\to \kp^\times$. We get a corresponding map
 \[\Lambda_U: \kp[W(L)]\to \kp[W(L)]\] defined by $w\mapsto \epsilon_U(w)w.$

 \begin{theorem}
     We have the following equality
     \[\rhoU=\phiU \circ \Lambda_U.\]
 \end{theorem}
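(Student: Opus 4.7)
The plan is to follow the argument of \cite{AJHR1} for the ordinary Springer sheaf mutatis mutandis, with $\epsilon_U$ playing the role of the sign character. First, I would upgrade $\rhoU$ and $\phiU$ to natural transformations $\rhoU^\kp, \phiU^\kp\colon \kp[W(L)]\to \End(\SpU^\kp)$, regarding $\SpU$ as a functor of the coefficient ring, and establish the partial analogues of \cite[Lemma~3.2]{AJHR1} and \cite[Prop.~3.6]{AJHR1}. Concretely, for every $\kp$-algebra $\kp'$ there is a ring homomorphism $\beta^\kp_{\kp'}\colon \End(\SpU^\kp)\to \End(\SpU^{\kp'})$ that intertwines both the Fourier-induced and the restriction-induced $W(L)$-actions. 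Since any ring $\kp$ is a $\mb{Z}$-algebra, this reduces the theorem to the single case $\kp=\mb{Z}$.

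In the case $\kp=\mb{Z}$, which embeds in the $\mb{Q}$-algebra $\mb{Q}$, Lemma~\ref{lm5.6} furnishes the crucial injectivity of $\sigma^P$. The commutative triangles \eqref{5.13a} and \eqref{5.13b} then give
\[
\sigma^P\circ \rhoU = \gamma^P\circ r^P, \qquad \sigma^P\circ \phiU = \gamma^P_c\circ r^P.
\]
Now Corollary~\ref{cor5.4} identifies these two composites with the maps classifying the classical $W(L)$-actions on $\h^{\ast+\dim\mf{g}-\dim\mf{p}}_c(G/P)$ and $\h^{\ast+\dim\mf{g}}_c(G/P)$ respectively. By Theorem~\ref{th3.1}, these two $W(L)$-representations differ by a tensor twist by the one-dimensional character $\h^{\mathrm{top}}(G/P)=\epsilon_U$, which is precisely the character defining $\Lambda_U$. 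Hence $\gamma^P\circ r^P = \gamma^P_c\circ r^P\circ \Lambda_U$, and substituting,
\[
\sigma^P\circ \rhoU = \sigma^P\circ \phiU\circ \Lambda_U.
\]
Injectivity of $\sigma^P$ then cancels it and yields $\rhoU=\phiU\circ \Lambda_U$ over $\mb{Z}$, hence over every $\kp$ by the compatibility of the first paragraph.

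The main obstacle I anticipate is precisely the first step, namely setting up the functorial formalism cleanly for the partial Springer setting. The Fourier side is a categorical equivalence, so the compatibility of $\rhoU^\kp$ with the scalar extension maps $\beta^\kp_{\kp'}$ is essentially formal. However the restriction-induced map $\widetilde{\phi^P}\colon \End(\GrP)\to \End(\SpU)$ arises from the isomorphism $i_\mf{g}^\ast \GrP \cong \SpU$ of Lemma~\ref{th1.13}, which does not come from an equivalence of categories, so one has to verify by hand that $\beta^\kp_{\kp'}$ intertwines $\phiU^\kp$ with $\phiU^{\kp'}$ using the base-change isomorphisms. Once this is in place, the rest of the proof is a direct translation of the strategy from \cite{AJHR1}, replacing $W$ by $W(L)$, $G/B$ by $G/P$, and $\mathrm{sgn}$ by $\Lambda_U$.
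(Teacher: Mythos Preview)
Your proposal is correct and follows essentially the same route as the paper: reduce to $\kp=\mathbb{Z}$ via the functorial formalism of \cite{AJHR1}, use the commutative triangles \eqref{5.13a}--\eqref{5.13b} together with Theorem~\ref{th3.1} and Corollary~\ref{cor5.4} to compare $\gamma^P\circ r^P$ and $\gamma^P_c\circ r^P$ up to the twist $\Lambda_U$, and then cancel $\sigma^P$ using Lemma~\ref{lm5.6}. The only caveat is a bookkeeping swap in your identification of $\sigma^P\circ\rhoU$ and $\sigma^P\circ\phiU$ with $\gamma^P\circ r^P$ versus $\gamma^P_c\circ r^P$ (compare the labelling of $\widetilde{\phi^P}$ and $\widetilde{\rho^P}$ in \eqref{5.13a}--\eqref{5.13b}), but since you also reverse the direction of the $\Lambda_U$-relation the two swaps cancel and your conclusion is unaffected.
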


 \begin{proof}
     By Theorem \ref{th3.1} and Corollary \ref{cor5.4}, we get
     \[ \gamma^P_c\circ r^P =\gamma^P \circ r^P \circ \Lambda_U. \] By Diagrams \eqref{5.13a} and \eqref{5.13b}, the above equality is the same as
     \[ \sigma^P \circ \rhoU=\sigma^P \circ \phiU \circ \Lambda_U.\] Finally, by Lemma \ref{lm5.6}, we know $\sigma^P$ is injective. This forces the equality
     \[\rho_U =\phi_U\circ \Lambda_U.\]
\end{proof}

 \begin{ex} It is natural to consider whether the sign change theorem for the $W$-action on $\Sp$ somehow `restricts' to the analogue we've presented for $\SpU$. Let's suppose that $L$ is a Levi subgroup such that there is a cuspidal perverse sheaf on $\mc{N}_L$. In this case, the group $W(L)$ is a Coxeter group \cite[Theorem ~5.9]{Lu1}. Hence there is a sign representation of $W(L)$ and a corresponding map  \[\sgn: \kp[W(L)]\to \kp[W(L)].\]
 
In this case, this question amounts to: should we expect a diagram like the following
 \[
 \begin{tikzcd}
     \kp[W(L)] \arrow[r, "\sgn"] \arrow[d] & \kp[W(L)] \arrow[r] \arrow[d] & \End(\h^*(G/P)) \arrow[d]\\ \kp[W] \arrow[r, "\sgn"] & \kp[W] \arrow[r] & \End(\h^*(G/B)) 
 \end{tikzcd}
 \]
to commute?
We consider a simple example that proves the left square does not commute in general. 
Let $G=SL_4$ and $L=S(GL_2\times GL_2)$. It is well-known that $\mc{N}_L$ supports a cuspidal perverse sheaf. The Weyl group of $G$ is $W=S_3=\langle s_1,s_2, s_3\rangle$, and the Weyl group of $L$ is $W_L=S_2\times S_2=\langle s_1, s_3 \rangle$.
It is not hard to see that $W(L)=\langle s=s_2s_1s_3s_2 \rangle$. Note that $s$ is simple reflection in $W(L)$ which means it has length $1$, but it has length $4$ regarded as an element in $W$. In the above diagram, commutativity of the left square requires $s$ to have the same parity of length with respect to both $W$ and $W(L)$. Thus, this diagram does not commute. 
\end{ex}


\begin{thebibliography}{9}
\bibitem[Ac]{Ac}
P. Achar, \textit{Perverse Sheaves and Applications to Representation Theory}. American Mathematical Society, 2021

\bibitem[AHJR1]{AJHR1}
P. Achar, A. Henderson, D. Juteau, and S. Riche.
\textit{Weyl group actions on the Springer sheaf}. Proc. Lond. Math. Soc. (3) 108 (2014), no.6, 1501-1528.

\bibitem[AHJR2]{AJHR2}
P. Achar, A. Henderson, D. Juteau, and S. Riche.
\textit{Modular generalized Springer Correspondence I: the general linear group}. J. Eur. Math. Soc. (JEMS) 18 (2016), no. 7, 1405–1436. 

\bibitem[AHJR3]{AJHR3} P. Achar, A. Henderson, D. Juteau, and S. Riche.
\textit{Modular generalized Springer Correspondence II: classical groups}. J. Eur. Math. Soc. (JEMS) 19 (2017), no. 4, 1013–1070. 

\bibitem[AHJR4]{AJHR4}
with A. Henderson, D. Juteau, and S. Riche. \textit{Modular generalized Springer correspondence III: exceptional groups}. Math. Ann. 369 (2017), 247–300.

\bibitem[AHJR]{AJHR}
P. Achar, A. Henderson, D. Juteau, and S. Riche,
\textit{Constructible sheaves on nilpotent cones in rather good characteristic}. Selecta Math. (N.S.) 23 (2017), no. 1, 203–243. 

\bibitem[AHR]{AHR}
P. Achar, A. Henderson, and S. Riche, \textit{Geometric Satake, Springer correspondence, and small representations II}. Represent. Theory 19 (2015), 94–166. 


\bibitem[AM]{AM}
P. Achar, C. Mautner.
\textit{Sheaves on nilpotent cones, Fourier transform, and a geometric Ringel duality.} Mosc. Math. J.15(2015), no.3, 407–423, 604. 

\bibitem[BL]{BL}
J. Bernstein, V. Lunts.
\textit{Equivariant sheaves and functors},
 Lecture Notes in Math., vol. 1578, Springer-Verlag, Berlin, 1994
 
\bibitem[BM1]{BM1}
 W. Borho and R. MacPherson, \textit{Representations des groupes de Weyl et homologie d’intersection pour les varieties nilpotentes}, C. R. Acad. Sci. Paris S´er. I Math. 292 (1981), 707–710.

\bibitem[BM]{BM}
 W. Borho, R. Macpherson. 
 \textit{Partial resolutions of nilpotent varieties}. Analysis and topology on singular spaces, II, III (Luminy, 1981), 23–74. Ast\'erisque, 101-102 (1983).
 
  
  \bibitem[Bry]{Bry}
J. L. Brylinski.\textit{Transformations canoniques, dualit\'e projective, th\'eorie de Lefschetz, transformations de Fourier et sommes trigonom\'etriques}. Ast\'erisque, (140-141):3–134, 1986. G\'eom\'etrie et analyse microlocales.
  
\bibitem[Ho]{Ho}
R. Hotta,\text{ On Springer’s representations}, J. Fac. Sci. Univ. Tokyo Sect. IA Math. 28 (1981), 863–876.

\bibitem[HK]{HK}
R. Hotta and M. Kashiwara. \textit{The invariant holonomic system on a semisimple Lie algebra.} Invent. Math., 75(2):327–358, 1984.



\bibitem[Ja]{J}
J. C. Jantzen, \textit{Nilpotent orbits in representation theory, in Lie theory}, 1–211, Progr. Math. 228, Birkh\"auser Boston, 2004. 

\bibitem[Ju]{Ju}
D. Juteau, \textit{Modular Springer correspondence and decomposition matrices}, Ph.D. thesis, Universit\'e Paris 7, 2007.



\bibitem[Ju3]{Ju3}
D. Juteau.
\textit{ Modular Springer correspondence, decomposition matrices and basic sets}. https://arxiv.org/abs/1410.1471

\bibitem[JMW]{parity}
D. Juteau, C. Mautner, and G. Williamson.
\textit{Parity Sheaves}. Journal of the American Mathematical Society. Vol 27, Number 4, Oct 2014, 1169-1212.

\bibitem[JMW2]{JMW2}
D. Juteau, C. Mautner and G. Williamson.
\textit{ Perverse sheaves and modular representation theory.} Geometric methods in representation theory. II, 315–352.
S\'emin. Congr., 24-II [Seminars and Congresses]
Soci\'et\'e Math\'ematique de France, Paris, 2012.


\bibitem[KS]{KS}
M. Kashiwara and P. Schapira, \textit{Sheaves on manifolds}. Grundlehren der Mathematischen Wissenschaften 292, Springer-Verlag, 1990.

\bibitem[Lu1]{Lu1} George Lusztig,
\textit{Coxeter Orbits and Eigenspaces of Frobenius}. Invent. Math. 38, 101 - 159 (1976)

\bibitem[Lu]{Lu}
G. Lusztig, \textit{Green polynomials and singularities of unipotent classes}. Adv. in Math. 42 (1981), 169–178.




 \bibitem[Lu3]{Lu3}
 George Lusztig, \textit{Intersection cohomology complexes on a reductive group}. Invent. Math. 75 (1984), no. 2, 205–272.
 
 \bibitem[Lu4]{Lu4}
 G. Lusztig, \textit{Fourier transforms on a semisimple Lie algebra over $\mb{F}_q$}, Algebraic groups Utrecht 1986, Lecture Notes in Mathematics, vol. 1271, Springer, Berlin, 1987, pp. 177-188.
 
 \bibitem[Mi]{Mi}
 I. Mirkovic,
 \textit{Character Sheaves on Reductive Lie Algebras}. Mosc. Math. J. (2004), no.4, 897–910, 981.
 
 \bibitem[Ri]{Ri}
 S. Riche,
 \textit{ Borel-Moore homology and the Springer correspondence by restriction}. https://lmbp.uca.fr/~riche/springer-restriction-final.pdf.

 
 \bibitem[Sh]{Sh}
 T. Shoji, \textit{Geometry of orbits and Springer correspondence}. Orbites unipotentes et repr\'esentations, I
Ast\'erisque(1988), no. 168, 9, 61–140.
 
 \bibitem[Sp]{Sp}
 T. A. Springer, 
 \textit{Trigonometric sums, Green functions of finite groups and representations of Weyl groups}, Invent. Math. 36 (1976), 173–207.
 
 

\end{thebibliography}
\end{document}